\definecolor{ffttww}{rgb}{1,0.2,0.4}
\definecolor{uuuuuu}{rgb}{0.27,0.27,0.27}
\definecolor{cqcqcq}{rgb}{0.75,0.75,0.75}
\definecolor{wwqqcc}{rgb}{0.4,0,0.8}
\newcommand{\R}{\mathbb{R}}
\newcommand{\N}{\mathbb{N}}
\newcommand{\HH}{\mathcal{H}}
\newcommand{\RR}{\mathcal{R}}
\newcommand{\A}{\mathcal{A}}
\newcommand{\B}{\mathcal{B}}
\newcommand{\est}{\emptyset}
\newtheorem{theorem}{Theorem}
\newtheorem{corollary}{Corollary}
\newtheorem{lemma}{Lemma}
\newtheorem{definition}{Definition}
\newtheorem{conj}{Conjecture}
\let\svthefootnote\thefootnote
\newcommand\freefootnote[1]{%
  \let\thefootnote\relax%
  \footnotetext{#1}%
  \let\thefootnote\svthefootnote%
}
\title[Hitting sets and colorings of hypergraphs]%
  {Hitting sets and colorings of hypergraphs}
\author{Bal\'azs Bursics}
\email{bursicsb@student.elte.hu \vspace*{-0.5cm}}
\address{Eötvös Loránd University Faculty of Science, Pázmány Péter sétány 1/A, H-1117 Budapest, Hungary}
\author{Bence Csonka}
\email{csonkab@edu.bme.hu \vspace*{-0.5cm}}
\address{Budapest University of Technology and Economics, M\H{u}egyetem rkp. 3., H-1111 Budapest, Hungary}
\author{Luca Szepessy}
\email{szepessyluca@student.elte.hu \vspace*{-0.5cm}}
\address{Eötvös Loránd University Faculty of Science, Pázmány Péter sétány 1/A, H-1117 Budapest, Hungary}
\begin{document}

\maketitle

\thispagestyle{empty}

\begin{abstract}	

In this paper we study the minimal size of edges in hypergraph families that guarantees the existence of a polychromatic coloring, that is, a $k$-coloring of a vertex set such that every hyperedge contains a vertex of all $k$ color classes. We also investigate the connection of this problem with $c$-shallow hitting sets: sets of vertices that intersect each hyperedge in at least one and at most $c$ vertices.

We determine for some hypergraph families the minimal $c$ for which a $c$-shallow hitting set exists.

We also study this problem for a special hypergraph family, which is induced by arithmetic progressions with a difference from a given set. We show connections between some geometric hypergraph families and the latter, and prove relations between the set of differences and polychromatic colorability.

\end{abstract}

\section{Introduction}


\freefootnote{MSC2020: 05C15, 05C65. Key words and phrases: geometric hypergraphs, polychromatic coloring, shallow hitting
sets, arithmetic progressions}

A {\it polychromatic k-coloring} of a hypergraph $H$ is a $k$-coloring of its vertex set such that every hyperedge contains a vertex of all $k$ color classes. Observe that a polychromatic 2-coloring is the same as the usual proper 2-coloring of hypergraphs, where we require that no edge is monochromatic. By merging $j$ color classes of a polychromatic $k$-coloring we get a polychromatic $(k-j+1)$-coloring, so the condition of monochromatic $k$-colorability becomes stricter as $k$ increases. A trivial necessary condition for the existence of a polychromatic $k$-coloring is that all edges of $H$ must be of size at least $k$.

\begin{definition} Denote by $H_{\ge m}$ or simply $H_m$ the hypergraph obtained from $H$ by deleting all hyperedges of size smaller than $m$, and denote by $H_{=m}$ the hypergraph consisting of the hyperedges of $H$ with size exactly $m$. Similarly, for a hypergraph family $\mathcal{H}$ let $\HH_m=\HH_{\ge m}=\{H_{\ge m}:H\in\HH\}$ and $\HH_{=m}=\{H_{=m}:H\in\HH\}$.
\end{definition}

One can make statements about the existence of polychromatic $k$-colorings for every member of a hypergraph family $\HH$ using the following parameter:

\begin{definition} Let $\mathcal{H}$ be a hypergraph family. Denote by $m_\mathcal{H}(k)$ the smallest positive integer $m$ such that for every $H\in\mathcal{H}$ there exists a polychromatic $k$-coloring of the hypergraph $H_{\ge m}$. If there is no such $m$, set $m_{\mathcal{H}}(k)=\infty$.
\end{definition}

Determining or bounding $m_{\mathcal{H}}(k)$ is an interesting problem in itself for some hypergraph families. {\it Range capturing hypergraph families} are particularly well studied: Suppose $\mathcal{S}$ is a family of planar (or higher dimensional) sets, called the {\it range family}, consider the family of hypergraphs whose members have a finite vertex set $V$, and whose edge set consists of all subsets $e\subseteq V$ such that $e=V\cap S$ for some $S\in\mathcal{S}$, in which case we say that the range $S$ captures $e$. Much research has been done on polychromatic colorings and the parameter $m(k)$ of such hypergraph families, for example, when the range family consists of halfplanes \cite{felsik}, translates of a polygon \cite{varadarajan}, translates of a convex body \cite{damasdi2021three}, homothets of a polygon \cite{damasdi2022realizing}, translates of an octant \cite{ternyolcad}, axis-parallel rectangles \cite{teglalap}, or axis-parallel strips \cite{apstrip}. For a comprehensive summary, see the website \cite{cogezoo} maintained by Keszegh and Pálvölgyi.

Investigating this problem is also motivated by the connection between polychromatic colorings and the cover-decomposability of planar sets. Suppose that we have a planar polygon $P$, and some translates of $P$ are given in such a way that every point of the plane is covered at least $m$ times. A natural question is whether this cover can be decomposed into two sets such that both set of translates of $P$ covers the whole plane in itself. The method of dualization can be used to reduce this to the problem of polychromatic colorability where the vertex set is the set of centers of gravity of the translate polygons, and the hyperedges are the sets contained in any translate of $P$. This is described in detail in \cite{dekompozicio}, for some related results see e.g. \cite{pach2016unsplittable, palvolgyi2010convex, palvolgyi2010indecomposable, varadarajan, keszegh2012octants, kovacs2015indecomposable}.

Now we turn to shallow hitting sets, and their role in constructing polychromatic colorings under suitable conditions.

\begin{definition}
 Let $H=(V,E)$ be a hypergraph, a $U\subset V$ vertex set $U\subset V$ is a {\it c-shallow hitting set} for a positive integer $c$ if for every hyperedge $e\in E$: $1\le|e\cap U|\le c.$
\end{definition} 

\begin{definition}
 Let $H=(V,E)$ be a hypergraph, and $V'\subset V$. Define $E'=\{e'\subset V'| \exists e\in E: e'=e\cap E'\}$, we say that $E'=(V',E')$ is the induced subhypergraph of $H$ on $V'$.
\end{definition}

Suppose that $\HH$ is a hypergraph family such that for arbitrary $H=(V,E)\in\HH$ and $V'\subset V$ the induced subhypergraph $H'$ is a member of $\HH$. Then $c$-shallow hitting sets can be used to create polychromatic colorings, see e.g. \cite{felsik, pshalfplane}.

In these papers $c$-shallow hitting sets are applied through the implicit use of the following lemma, which we restate here (and which also follows from \cite[Theorem 2.11.]{pshalfplane}):

\begin{lemma}\label{prop1}
Suppose that the hypergraph family $\mathcal{H}$ satisfies the condition that for arbitrary $H=(V,E)\in\HH$ and $V'\subset V$, the induced subhypergraph of $H$ on $V'$ is an element of $\HH$, and that every hyperedge $e\in E$ contains a vertex $v\in e$ such that for $e'=e\setminus \{v\}$ and $E'=E\setminus \{e\}\cup\{e'\}$ we have $H'=(V,E')\in\HH.$
Suppose that for every $m\ge c$ and every element of $\mathcal{H}_{=m}$ there exists a $c$-shallow hitting set. Then $m_\mathcal{H}(k) \leq c \cdot (k-1) +1$.
\end{lemma}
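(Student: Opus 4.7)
The plan is to prove the bound by induction on $k$. The base case $k=1$ is immediate: $m_\HH(1) \le 1$ since the constant 1-coloring trivially gives each nonempty hyperedge a vertex of the unique color. For the inductive step, assume $m_\HH(k-1) \le c(k-2)+1$ and fix $H=(V,E) \in \HH$; set $m := c(k-1)+1$. The goal is to exhibit a polychromatic $k$-coloring of $H_{\ge m}$.

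The first step is to iteratively apply the shrinking hypothesis: while the current hypergraph (in $\HH$) still contains an edge of size strictly greater than $m$, remove from it the vertex whose existence is guaranteed. Since each step strictly decreases the sum of edge sizes, this terminates with some $H' = (V, E') \in \HH$ in which every edge has size at most $m$. Edges of $H$ already of size at most $m$ are never touched, and each edge $e \in E$ with $|e| \ge m$ corresponds to a shrunken edge $e^* \in E'$ with $e^* \subseteq e$ and $|e^*| = m$. Thus $H'_{=m}$ consists precisely of these $e^*$ together with the original size-$m$ edges of $H$.

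Since $H' \in \HH$, we have $H'_{=m} \in \HH_{=m}$, and (noting $m \ge c$ for $k \ge 2$) the hypothesis supplies a $c$-shallow hitting set $U \subseteq V$ for $H'_{=m}$, so $1 \le |e^* \cap U| \le c$ for every $e^* \in H'_{=m}$. Color every vertex of $U$ with color $k$. By closure of $\HH$ under induced subhypergraphs, $H'' := (V \setminus U, E'')$ lies in $\HH$. For each $e \in E$ with $|e| \ge m$, decomposing $e = e^* \sqcup (e \setminus e^*)$ yields
\[
|e \cap U| \;=\; |e^* \cap U| + |(e \setminus e^*) \cap U| \;\le\; c + (|e| - m),
\]
whence $|e \setminus U| \ge m - c = c(k-2)+1$. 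By the induction hypothesis applied to $H''$, the hypergraph $H''_{\ge c(k-2)+1}$ admits a polychromatic $(k-1)$-coloring using the colors $\{1, \ldots, k-1\}$; combining this with color $k$ on $U$ gives every $e \in E$ of size at least $m$ a vertex of each of the $k$ colors, completing the induction.

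The main obstacle to confront is the mismatch between the hypothesis, which produces hitting sets only for the uniform slice $\HH_{=m}$, and the edges we actually must color, which can be much larger than $m$. Shrinking bridges this gap: the cap $|e^* \cap U| \le c$ controls the hits of $U$ inside the shrunken portion, while the extra hits $|(e \setminus e^*) \cap U|$ are bounded by $|e|-m$, the very amount by which $|e|$ exceeds the threshold. These cancel to give the uniform lower bound $|e \setminus U| \ge m-c$, which is exactly the threshold needed to invoke the inductive hypothesis on the induced subhypergraph.
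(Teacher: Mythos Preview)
Your proof is correct and follows essentially the same approach as the paper's: shrink the large edges down to the threshold size, extract a $c$-shallow hitting set for one color class, and recurse on the remaining vertices. The only cosmetic difference is that you phrase the iteration as induction on $k$ and keep track of the original (unshrunk) edges via the estimate $|e\cap U|\le c+(|e|-m)$, whereas the paper iterates directly on the shrunk edges throughout.
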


For range capturing hypergraph families, the first condition of the above statement -- that taking an induced subhypergraph does not lead out of the family -- suffices automatically, because if a range $S$ captures an edge $e\subset V$, then the same $S$ also captures $e\cap V'$.

On the other hand, the existence of $c$-shallow hitting sets is not so evident, for example it is a frequent case for geometric hypergraph families that a hyperedge contains more than $c$ pairwise disjoint hyperedges. 
Thus we need to restrict to a {\it Sperner} subfamily of the original hypergraph family (that is, where no two edges are contained one in another), to have a chance for the existence of $c$-shallow hitting sets. However, in some cases, e.g., hypergraphs induced by bottomless rectangles (see below), this is not strong enough either \cite{pshalfplane}. 

Fortunately, Lemma \ref{prop1}. is about a special type of Sperner subfamily, the $m$-uniform members of the family, which gives even better chances to obtain $c$-shallow hitting sets. Therefore, for a given hypergraph family $\HH$, it is natural to ask whether there is a positive integer $c$ such that there exists a $c$-shallow hitting set for all $H_{=m}\in \HH_{=m}$.



Since the resulting bound is linear in $k$, Lemma \ref{prop1}. is also loosely related to an important conjecture of the field:

\begin{conj}[\cite{dekompozicio}]
If $m_\HH(2)<\infty$ for a hypergraph family $\HH$, then $m_\HH(k)=O(k)$.
\end{conj}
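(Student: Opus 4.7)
The plan is to attempt to bootstrap from the 2-colorability hypothesis, which gives a finite threshold $m_0 := m_\HH(2)$. The most natural first attempt is an inductive one on $k$: given $H\in\HH$ with all edges of size at least some threshold depending on $k$, apply the 2-coloring guarantee to write $V=V_1\sqcup V_2$ with every edge meeting both sides, designate $V_1$ as the final color class, and hope that the induced subhypergraph on $V_2$ still lies in $\HH$ (for which closure under induced subhypergraphs suffices) and has every edge large enough for a polychromatic $(k-1)$-coloring. Iterating this $k-1$ times would yield the desired coloring, provided we can control how fast edges shrink.

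A cleaner reformulation, motivated directly by Lemma~\ref{prop1}, is to try to produce $c$-shallow hitting sets for every $H_{=m}\in\HH_{=m}$, with $c$ depending only on $\HH$. If such sets exist, Lemma~\ref{prop1} already yields the linear bound $m_\HH(k)\le c(k-1)+1$. So one would aim to show that $m_\HH(2)<\infty$ implies the existence of $c$-shallow hitting sets in the uniform members of $\HH$. A tempting construction is the iterated 2-coloring: apply a polychromatic 2-coloring to $H$, keep one color class $V_1$, then apply a 2-coloring to the induced subhypergraph on $V_1$, keep one class $V_2\subseteq V_1$, and so on until the edges of the current induced subhypergraph drop below the threshold $m_0$. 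The terminal class would be a hitting set, and if one could argue that each iteration shrinks the intersection of every edge by a constant factor, the final class would be $c$-shallow for $c=O(1)$.

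The main obstacle, and presumably the reason the conjecture has stayed open, is the \emph{imbalance problem}: the hypothesis furnishes only an existential partition with each side meeting every edge in at least one vertex, with no quantitative guarantee on how those intersections split. Consequently, the iterated 2-coloring scheme can terminate almost immediately — one color class could capture nearly all of a given edge while the other captures only a single vertex — so the retained class need not be shallow in any uniform sense, and the inductive approach cannot control the rate at which edges shrink. A successful proof would have to extract quantitative splitting information from the mere existence of 2-colorings, perhaps by averaging over many disjoint 2-colorings, by a probabilistic refinement, or by exploiting structural features of $\HH$ (bounded VC-dimension, a geometric origin, closure under vertex deletion in the sense of Lemma~\ref{prop1}) beyond what the bare hypothesis provides. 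Absent such an additional ingredient, I do not see how to pass from $m_\HH(2)<\infty$ to the linear bound $m_\HH(k)=O(k)$.
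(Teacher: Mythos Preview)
The statement is presented in the paper as an open \emph{conjecture} (Conjecture~1, attributed to~\cite{dekompozicio}); the paper neither proves it nor claims a proof. There is therefore no ``paper's own proof'' to compare against.

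Your write-up is not a proof and does not pretend to be one: you sketch the two natural bootstrapping strategies (iterated $2$-coloring and extraction of $c$-shallow hitting sets via Lemma~\ref{prop1}), and you correctly isolate the obstruction --- the $2$-colorability hypothesis gives no quantitative balance, so one side of the split may swallow almost an entire edge, and the iteration cannot be controlled. That diagnosis is accurate and matches the state of the problem as the paper presents it. Your closing remark, that any successful argument would need structural input beyond the bare hypothesis (closure properties, VC-type conditions, geometric structure), is also in line with how the paper situates the conjecture relative to Lemma~\ref{prop1} and the shallow-hitting-set machinery.

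In short: there is no gap to name, because you have not claimed a proof; and there is no divergence from the paper's approach, because the paper offers none. Your discussion is an appropriate response to an open problem.
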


It would also be interesting to see whether Lemma \ref{prop1}. is reversible, more precisely, does an upper bound of $m_\mathcal{H}(k)$, which is linear in $k$, guarantee a constant $c$ independent from $k$ such that there exists a $c$-shallow hitting set for every member of $\HH_{=m}$?

\subsection{Bottomless rectangles}

\begin{definition} Denote by $\mathcal{B}$ the hypergraph family which consists of those hypergraphs $H=(V,E)$, for which $V\subset \R^2$ is finite, and the edges are induced by bottomless rectangles: every $e\in E$ edge can be written in the form $e=\{(x,y)\in V: x_0 < x < x_1, y<y_0\}$ for some $x_0, x_1, y_0 \in\R$. 
\end{definition}
\begin{figure} [h!]
    \centering
    \includegraphics[width=0.7\textwidth]{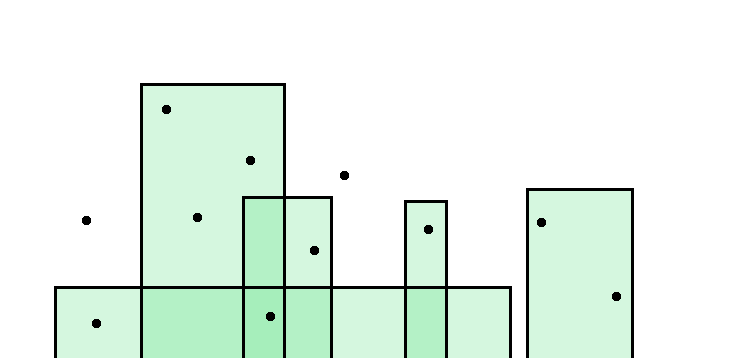}
    \caption{Hypergraph induced by bottomless rectangles}
\end{figure}

It is already known \cite{feneketlen} that $1.67k\le m_{\mathcal{B}}(k)\le 3k-2$, and that for an arbitrary $c$ there is a Sperner member of the family with no $c$-shallow hitting set \cite{pshalfplane}. The existence of 2-shallow hitting sets on $\B_{=m}$ would imply $m_{\mathcal{B}}(k)\le 2k-1$, and the existence of 3-shallow hitting sets would give another proof of the upper bound of $3k-2$. Keszegh and Pálvölgyi \cite{pshalfplane}, and also Chekan and Ueckerdt \cite{chekan2022polychromatic} asked whether this could be the case. However, our following result refutes these possibilities:

\begin{theorem}\label{bottomless}
Let $m \ge 12$, then there is a member of $\mathcal{B}_{=m}$ which does not have a 3-shallow hitting set.
\end{theorem}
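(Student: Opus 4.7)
The plan is to prove Theorem~\ref{bottomless} by exhibiting an explicit hypergraph $H=(V,E)\in\B_{=m}$ in $\R^2$ that admits no 3-shallow hitting set. I shall first handle the base case $m=12$ and then extend to $m\ge 13$ by a padding argument.

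For $m=12$ the underlying idea is a \emph{core-plus-petals} configuration. Place a ``core'' cluster $C$ together with pairwise disjoint ``petal'' clusters $P_1,\dots,P_k$ in $\R^2$ so that (i)~each $C\cup P_i$ is realized by a bottomless rectangle, giving an edge $E_i$, and (ii)~the union $G=P_1\cup\cdots\cup P_k$ is also realized by a bottomless rectangle that misses $C$, giving another edge. The sizes are chosen so that $|E_i|=|G|=12$; a natural realization uses $|C|=9$, $k=4$, $|P_i|=3$, with $C$ placed on one side of the picture and the petals stacked on the other side at strictly increasing heights, so that a narrow horizontal cut isolates each $C\cup P_i$ while a single wider cut captures precisely the petal block. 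This is where the lower bound $m\ge 12$ enters: we need $|G|=k\cdot 3=12$ and $k\ge 4$ petals to drive the main inequality below.

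Writing $c=|U\cap C|$ and $p_i=|U\cap P_i|$ for any candidate 3-shallow hitting set $U$, the constraints $|U\cap E_i|\in[1,3]$ and $|U\cap G|\in[1,3]$ give $c\le 3$ and $\sum_i p_i\in[1,3]$, and they already rule out the extreme cases $c=0$ (each $p_i\ge 1$, so $\sum p_i\ge 4$) and $c=3$ (each $p_i=0$, so $\sum p_i=0$). To rule out the intermediate cases $c\in\{1,2\}$ I would enrich the configuration with a second, interlocking core-plus-petals gadget -- for instance a symmetric copy on the opposite side of the petals, supplying a second core $C'$ and four more edges $C'\cup P_i$ -- together with a few carefully chosen ``mixed'' $m$-edges formed from subsets of $C$, $C'$, and one or two petals. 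The goal is to accumulate enough linear constraints on the tuple $(c,c',p_1,\dots,p_k)$ to force integer infeasibility; once the extra edges are in place this reduces to a finite, mechanical case check.

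The extension to $m\ge 13$ is then routine: augment the configuration with $m-12$ auxiliary vertices placed to inflate every existing 12-edge into an $m$-edge without creating any new $m$-edge, for example as a tight cluster strictly below and inside the common $x$-range of all the bottomless rectangles used above. The 3-shallow infeasibility argument transfers verbatim. The main obstacle is geometric rather than combinatorial: in $\R^2$ the order of $x$-coordinates forces a bottomless rectangle spanning the core and a distant petal to sweep over every intermediate petal, so the petal heights and horizontal positions must be chosen carefully to make exactly the intended edges appear and to avoid introducing any spurious $m$-edge that would either collapse one of the inequalities or provide $U$ with an unintended escape.
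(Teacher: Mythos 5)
There is a genuine gap: the entire difficulty of the theorem is concentrated in the cases $|U\cap C|\in\{1,2\}$, and your proposal does not prove them. Your core-plus-petals gadget with edges $C\cup P_i$ and $G=P_1\cup\cdots\cup P_4$ only rules out $c=0$ and $c=3$; as you note yourself, $c=1,\ p_1=1$ (all other $p_i=0$) is a perfectly good 3-shallow hitting set for that gadget. The proposed repair --- a symmetric second core $C'$ plus ``a few carefully chosen mixed $m$-edges'' --- is not constructed, not shown to be realizable by bottomless rectangles, and not shown to force integer infeasibility; indeed $c=c'=1,\ p_1=1$ still satisfies every constraint your two-core system imposes, so it is not at all clear that any finite enrichment of this shape works. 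Realizability is the second, related problem: a bottomless rectangle is unbounded below, so any ``mixed'' edge you add must survive the sweep over everything beneath it, and you cannot simply declare arbitrary subsets of $C\cup C'\cup P_1\cup\cdots\cup P_4$ to be hyperedges. Since the linear system is exactly where the proof lives, what you have is the easy boundary check, not a proof; your explanation of why $m\ge 12$ is needed is correspondingly unsupported.

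For comparison, the paper's construction resolves the $c\in\{1,2\}$ difficulty by a different and more elaborate mechanism. It uses one edge $\mathcal{P}=\{P_1,\dots,P_m\}$ to force a hit at some $P_i$, and attaches to each $P_i$ a cascade of three diagonals $A_i$ (size $m-2$), $B_i$ (size $m$) and groups $C_{i,j}$ interleaved with the heights of $B_i$. The key counting step is that for every $j$, the $m-3$ rightmost points of $A_i$ together with any $3$ consecutive points of $B_i$ form an edge of size $m$; if $A_i$ carries no hitting-set point besides $P_i$, this forces at least $\lfloor m/3\rfloor\ge 4$ hits inside the single edge $B_i$ (this is where $m\ge 12$ enters), while if $A_i$ carries a second hit, a rectangle picking up exactly one hit point of $B_i$ and one of $C_{i,j}$ in its top corners accumulates $4$ hits in one edge. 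You would need an idea of comparable strength --- some family of realizable $m$-edges each of which reuses a large hit-carrying block while isolating one additional forced hit --- to close the $c\in\{1,2\}$ cases; without it the argument does not go through. The padding step for $m\ge 13$ is also only sketched (auxiliary points below everything create new $m$-edges through the auxiliary cluster that must be checked), but that is minor compared to the main gap.
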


We do not yet know whether there is a 4-shallow hitting set for any $\mathcal{B}_{=m}$ with $m$ large enough. However, Planken and Ueckerdt recently showed that there is a 10-shallow hitting set for any member of $\mathcal{B}_{=m}$ \cite{bless10}.

\subsection{Axis-parallel strips}

We also investigated this question on another geometric hypergraph family and its dual:

\begin{definition} Denote by $\A$ the hypergraph family which consists of those hypergraphs $H=(V,E)$ for which $V\subset \R^2$ is finite, and the edges are induced by axis-parallel strips: every $e\in E$ edge can be presented in the form $V\cap\{(x,y)\in\R^2: x_0<x<x_1\}$ or $V\cap\{(x,y)\in\R^2: y_0<y<y_1\}$ for some $x_0, x_1, y_0, y_1 \in\R$. Denote by $\A^*$ its dual: the family of such hypergraphs that the vertices are axis-parallel strips, and the edges are sets consisting of the strips containing a given point $(x,y)$. 
\end{definition}

The following results are presented in \cite{apstrip}: 
$$2\bigg\lceil\frac{3k}{4}\bigg\rceil-1 \le m_\A(k)\le 2k-1$$
$$2\bigg\lceil \frac{k}{2}\bigg\rceil +1\le m_{\A^*}(k)\le 2k-1$$

The lower bound for $m_{\A^*}(k)$ comes from the more general case of arbitrary dimension, however, in the plane we can improve on this bound:

\begin{theorem}\label{strip_dual}
For the hypergraph family $\A^*$ we have 
$2\left(\left\lceil \frac{3}{4}k\right\rceil -1\right) + 1 \leq m_{\A^*}(k)$.
\end{theorem}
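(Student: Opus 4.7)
The plan is to exhibit a hypergraph $H \in \A^*$ whose edges of size at least $2s-2$ admit no polychromatic $k$-colouring, where $s = \lceil 3k/4 \rceil$. The vertex set of $H$ will be a carefully chosen collection of horizontal and vertical axis-parallel strips in $\R^2$, organised into several nested piles of horizontal strips at distinct $y$-heights and several nested piles of vertical strips at distinct $x$-positions. The strip positions are chosen so that for each pile pair (one horizontal, one vertical), test points in their common intersection region realise every prefix pair $(a,b)$ as the hyperedge $\{H^{(i)}_1, \ldots, H^{(i)}_a\} \cup \{V^{(j)}_1, \ldots, V^{(j)}_b\}$, where $H^{(i)}_1 \supset \cdots \supset H^{(i)}_N$ is the $i$-th horizontal pile and $V^{(j)}_1 \supset \cdots \supset V^{(j)}_N$ the $j$-th vertical pile.

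Assuming for contradiction that a polychromatic $k$-colouring $c$ of $H_{\ge 2s-2}$ exists, for each colour $\alpha \in [k]$ and each horizontal pile $i$ let $h^{(i)}_\alpha \in \N \cup \{\infty\}$ be the smallest index at which $\alpha$ appears in the $i$-th horizontal pile; define $v^{(j)}_\alpha$ symmetrically. Polychromaticity at the $(a,b)$-prefix test point of pile pair $(i,j)$ translates into the combinatorial condition: for every colour $\alpha$, $a \ge h^{(i)}_\alpha$ or $b \ge v^{(j)}_\alpha$. Equivalently, for every pile pair the colouring must satisfy $\max_\alpha \bigl(h^{(i)}_\alpha + v^{(j)}_\alpha\bigr) \le 2s-1$.

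The contradiction follows from a pigeonhole over colours and pile pairs. Partition $[k]$ by setting $A = \{\alpha : \alpha$ appears on some horizontal strip$\}$ and $B = \{\alpha : \alpha$ appears on some vertical strip$\}$, so that $A \cup B = [k]$. Colours in $A \setminus B$ and $B \setminus A$ must appear early enough in every relevant pile (otherwise some large edge misses them), while colours in $A \cap B$ can be shared between the two sides. Optimising over the partition sizes pins the extremum at $|A\setminus B| = |B\setminus A| \approx k/4$, $|A\cap B| \approx k/2$; at this extremum the constraints still force some triple $(i^*, j^*, \alpha^*)$ with $h^{(i^*)}_{\alpha^*} + v^{(j^*)}_{\alpha^*} \ge 2s$, producing a test point whose edge has size at least $2s-2$ but misses colour $\alpha^*$.

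The main obstacle is arranging the multi-pile geometry so that the colourer cannot simply apply the single-pile-pair ``identity versus reverse'' strategy, which alone already achieves $m = k$. The adversary must couple different piles (for example, by letting long horizontal strips extend across multiple vertical-pile regions) so that locally optimal colouring choices at different pile pairs conflict. This planar-specific coupling is what produces the sharp $3/4$ factor, improving on the $1/2$ factor obtained from the dimension-agnostic lower bound $2\lceil k/2\rceil + 1$.
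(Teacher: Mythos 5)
There is a genuine gap: your write-up is a plan rather than a proof, and the two load-bearing steps are both left unexecuted. First, the construction is never specified --- you describe ``nested piles'' realising all prefix pairs $(a,b)$ and then say the piles must be ``coupled'' by letting strips extend across several regions, but you never say how, and you yourself flag this as ``the main obstacle.'' Second, the decisive combinatorial step is pure assertion: ``optimising over the partition sizes pins the extremum at $|A\setminus B|\approx k/4$, \dots; at this extremum the constraints still force some triple with $h^{(i^*)}_{\alpha^*}+v^{(j^*)}_{\alpha^*}\ge 2s$.'' No inequality is derived, and it is not at all clear that the nested-prefix structure produces the $3/4$ factor --- as you note, a single pile pair only gets you to roughly $m\ge k$, and nothing in the sketch shows how the coupling closes the gap to $\frac{3}{2}k$.

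The actual argument needs no nesting and no prefix structure at all. Take the four strips $\{0<x<2\}$, $\{1<x<3\}$, $\{0<y<2\}$, $\{1<y<3\}$ and repeat each one $\lceil \frac{3}{4}k\rceil-1$ times (perturbing slightly to get distinct vertices). The geometric point is that any two of these four strips have a piece of their intersection disjoint from the other two, so for every pair of groups there is a point lying in exactly the copies of those two strips; the corresponding hyperedge has size $2(\lceil \frac{3}{4}k\rceil-1)$. Each group has fewer than $\frac{3}{4}k$ vertices, hence misses strictly more than $\frac{k}{4}$ colours; summing over the four groups gives more than $k$ missing (colour, group) incidences, so by pigeonhole some colour is missing from two groups, and the hyperedge realised by that pair is not polychromatic. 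This gives $m_{\A^*}(k)\ge 2(\lceil \frac{3}{4}k\rceil-1)+1$ directly. Your partition of colours into $A\setminus B$, $A\cap B$, $B\setminus A$ gestures at the right kind of counting, but without the concrete four-group construction and the explicit ``more than $k/4$ missing colours per group'' count, the proposal does not establish the bound.
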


The existence of 2-shallow hitting sets on the hypergraph family $\A$ would give an alternative proof for the upper bound of $m_\A(k)$ via Lemma \ref{prop1} but this is not the case for this problem either.

\begin{theorem}\label{strip}
For sufficiently large $m$ there is an element in $\A_{=m}$ with no 2-shallow hitting set.
\end{theorem}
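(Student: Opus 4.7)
\emph{Plan.}
I would exhibit a point configuration $V\subset\R^2$ whose axis-parallel strip hypergraph lies in $\A_{=m}$ and admits no 2-shallow hitting set. The natural starting point is a grid-like arrangement of $L$ rows of $m$ points each (with $L=L(m)$ to be chosen), where the $x$-ordering $\sigma_1$ groups the points row-by-row and the $y$-ordering $\sigma_2$ groups them column-by-column; the $m$-edges are then the row windows, their $\sigma_1$-shifts between consecutive rows, the internal column $m$-windows, and the cross-column $\sigma_2$-windows. Since a plain grid still admits explicit 2-shallow hitting sets (for example, an alternating profile in which some rows pick the two extreme columns and the remaining rows pick a middle column, analogous to the anti-diagonal for square grids), one needs to enrich the construction. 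I would do this by perturbing the $y$-coordinates within each column band so the row-order inside column $j$ becomes a carefully chosen permutation $\tau_j$ of $[L]$, or equivalently by introducing auxiliary ``gadget'' points that close off the remaining alternating patterns.

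\emph{Analysis.} For any candidate 2-shallow hitting set $U$ the following hold: (i) each row is a full $\sigma_1$-window, so $|U\cap\mathrm{row}_i|\in\{1,2\}$; (ii) the shifted $\sigma_1$-windows between rows $i$ and $i+1$ impose a near-monotonicity condition on the chosen column indices, essentially forcing a staircase-like row-profile $(c_i)_i$ (with the only freedom coming from rows of size $2$ and from ``restarts'' at the maximal column); and (iii) for each column $j$, the row-selection $s_j=\{i:j\in c_i\}$, relabelled by $\tau_j$, must itself be a 2-shallow hitting set for the $m$-windows of $[L]$, together with cross-column compatibility constraints. Combining (i) and (ii) one reduces the possible row-profiles to a manageable list of staircase patterns; for each such profile one then computes the induced $s_j$'s and shows, by choosing the $\tau_j$'s adversarially, that at least one column yields a $\tau_j(s_j)$ that either misses an $m$-window or contains three consecutive selected positions—contradicting (iii).

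\emph{Main obstacle.} The richness of the space of 2-shallow hitting sets for the pure grid (explicit alternating patterns exist already for small $m$) is what forces the construction to be enriched; the construction must be rigid enough to block every staircase profile simultaneously, while still being realizable in the plane by axis-parallel strips. The hypothesis ``for sufficiently large $m$'' enters precisely at the matching step: once $m$ is large, the number of available column permutations (or gadget placements) dominates the number of staircase row-profiles, so by a pigeonhole or probabilistic argument the $\tau_j$'s can be chosen to defeat every profile at once. Making this matching explicit—or arguing its existence probabilistically—is the technical heart of the proof.
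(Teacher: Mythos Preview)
Your plan is not a proof: the step you yourself label ``the technical heart'' is never carried out, and there are concrete reasons to doubt it goes through as sketched. Two specific issues. First, the counting heuristic (``the number of column permutations dominates the number of staircase profiles'') is the wrong comparison: you need \emph{one} fixed tuple $(\tau_1,\dots,\tau_m)$ that defeats \emph{every} admissible profile simultaneously, so what is required is a union bound or Lov\'asz-local-lemma style estimate showing that for a random choice of the $\tau_j$ each profile fails with sufficiently high probability, together with control of the dependencies --- none of which you provide, and the dependencies are strong because many profiles share the same column sets $s_j$. Second, once the $\tau_j$'s are nontrivial the \emph{cross-column} vertical $m$-windows (the $\sigma_2$-windows straddling two adjacent columns) are governed by two different permutations at once; these are genuine edges of your hypergraph that you list but never analyze, and they can supply a hitting-set point precisely where you hoped a single-column window would be empty, undoing the blocking. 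Without controlling these edges the whole scheme is inconclusive.

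The paper's proof is entirely different in architecture and fully explicit, with no probabilistic or matching step. It places a diagonal backbone $\mathcal{X}=\{x_1,\dots,x_m\}$ in an otherwise empty vertical strip (forcing one hitting-set point), then in the horizontal strip of each $x_i$ it places three groups $X_{i,1},X_{i,2},X_{i,3}$ of size $2a$ with $a=\lceil m/3\rceil-1$, each in its own reserved vertical strip. For every such group $X$ a local gadget of nested diagonal blocks $B,C,D,E,F,G,H,I,J,K$ of prescribed sizes is inserted in that vertical strip; a short deterministic case analysis on these blocks shows that any $2$-shallow hitting set must meet $X$. Hence the horizontal strip of $x_i$ contains at least $1+3=4>2$ hitting-set points among at most $1+3\cdot 2a\le m$ vertices, already a contradiction. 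The argument is local per gadget and uses only the parameter arithmetic $m=3a+\tfrac{b}{2}-1$ with $b\in\{4,6,8\}$ and $m\ge 5b$; nothing like your grid/permutation reduction appears.
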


Note that for given $m$ and for $k= \lceil \frac{m}{2}\rceil$ colors, taking a color class in the coloring corresponding to the upper bound in \cite{apstrip} gives a 3-shallow hitting set, so we have determined the smallest possible $c$ with a $c$-shallow hitting set for every member of $\A_{=m}$. 


This problem motivated the study of the following more general hypergraph families:

\begin{definition} Denote by $\A^+$ the hypergraph family which consists of such hypergraphs $H=(V,E)$ for which $V\subset \R^2$ is finite, and the edges are induced by the union of a horizontal and a vertical axis-parallel strip: every $e\in E$ edge can be presented in the form $V\cap\big(\{(x,y)\in\R^2: y_0<y<y_1\}\cup \{(x,y)\in\R^2: x_0<x<x_1\}\big)$ for some $x_0,x_1,y_0,y_1\in\R$. 
\end{definition}

\begin{definition} Denote by $\A_s$ the hypergraph family which consists of such $H=(V,E)$ hypergraphs for which $V\subset \R^2$ is finite, and the edges are induced by the union of $s$ axis-parallel strips: every $e\in E$ edge can be written in the form $V\cap \Big(\cup_{i=1}^s A_i\Big)$ where $A_1,\ldots, A_s$ are axis-parallel strips. 
\end{definition}

We have the following bounds about their polychromatic colorings:

\begin{theorem}\label{kereszt}
$$3\bigg\lceil \frac{3}{4}k\bigg\rceil -2\le m_{\A^+}(k)\le 4k-3.$$
\end{theorem}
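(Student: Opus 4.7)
\emph{Upper bound.} I would derive $m_{\A^+}(k) \le 4k-3$ by a pigeonhole argument on top of the bound $m_\A(k) \le 2k-1$ established in \cite{apstrip}. Given $H=(V,E)\in\A^+$, form the auxiliary hypergraph $H^*=(V,E^*)\in\A$ whose edges are all restrictions $S\cap V$ for $S$ a horizontal or vertical strip. By the $\A$-bound, the subhypergraph $H^*_{\ge 2k-1}$ admits a polychromatic $k$-coloring $\chi$ of $V$. Now for every edge $e=(H_e\cup V_e)\cap V\in E$ with $|e|\ge 4k-3$, inclusion-exclusion gives
\[
|H_e\cap V|+|V_e\cap V|\ \ge\ |(H_e\cup V_e)\cap V|\ =\ |e|\ \ge\ 4k-3,
\]
so at least one of $|H_e\cap V|, |V_e\cap V|$ is at least $2k-1$. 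The corresponding strip intersection is then an edge of $H^*_{\ge 2k-1}$ and is polychromatic under $\chi$; since it is contained in $e$, the edge $e$ is polychromatic too. Hence $\chi$ is a polychromatic $k$-coloring of $H_{\ge 4k-3}$.

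\emph{Lower bound.} For $m_{\A^+}(k)\ge 3\lceil 3k/4\rceil - 2$, I plan to exhibit an explicit $H\in\A^+$ all of whose edges have size at least $3\lceil 3k/4\rceil - 3$ and which admits no polychromatic $k$-coloring. The natural starting point is the lower bound construction for $\A$ giving $m_\A(k)\ge 2\lceil 3k/4\rceil - 1$ from \cite{apstrip}: it provides a vertex configuration $V_0\subset\R^2$ together with an obstruction pattern realized by horizontal and vertical strips. To lift this to $\A^+$, I would augment $V_0$ by placing an additional structured block $W\subset\R^2$ of roughly $\lceil 3k/4\rceil - 1$ vertices, located so that every union of one horizontal and one vertical strip either (i) captures a strip-obstruction in $V_0$ together with all of $W$ via the orthogonal strip, raising its size to at least $3\lceil 3k/4\rceil - 3$, or (ii) is too small to appear in $H_{\ge 3\lceil 3k/4\rceil - 3}$ at all.

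The main obstacle will be verifying that no polychromatic $k$-coloring of this augmented hypergraph exists. I expect the argument to proceed by contradiction: suppose $\chi$ is a polychromatic $k$-coloring of $V_0\cup W$. The structure of $W$ would force $\chi$ to use all $k$ colors on $W$, so within every $\A^+$-edge in class (i) the requirement reduces to polychromaticity on the $V_0$-part. Choosing enough such $\A^+$-edges, one for each horizontal- or vertical-strip edge of the original $\A$-obstruction, the constraints on $\chi|_{V_0}$ recreate exactly the $\A$-lower-bound obstruction on $V_0$, yielding the desired contradiction. Calibrating $|W|$ and the geometry of $W$ so that both the edge-size lower bound $3\lceil 3k/4\rceil - 3$ and the clean reduction to the $\A$-obstruction hold is the delicate part of the construction.
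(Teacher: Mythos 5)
Your upper bound argument is correct and is exactly the paper's: color so that all single strips of size at least $2k-1$ are polychromatic (Theorem 1 of \cite{apstrip}), and note that an edge of size at least $4k-3$ that is a union of a horizontal and a vertical strip must contain at least $2k-1$ points in one of the two strips. No issues there.

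The lower bound plan, however, has a genuine gap, and in fact the central mechanism you describe is self-defeating. You propose a block $W$ of roughly $\lceil 3k/4\rceil-1$ vertices contained in every relevant $\A^+$-edge, and you assert that ``the structure of $W$ would force $\chi$ to use all $k$ colors on $W$, so the requirement reduces to polychromaticity on the $V_0$-part.'' First, a set of $\lceil 3k/4\rceil-1<k$ points cannot carry all $k$ colors. Second, even if it could, an edge containing a rainbow subset is automatically polychromatic, so no obstruction would remain --- the reduction goes the wrong way. What you actually need is that $W$ and the $V_0$-obstruction \emph{miss a common color}. But there is no way to force this with a single fixed block $W$: a cluster of $\lceil 3k/4\rceil-1$ points misses only strictly more than $k/4$ colors, and three such clusters (the two forming the strip-obstruction in $V_0$ plus $W$) miss in total only slightly more than $3k/4<k$ colors, so no pigeonhole argument yields a common missing color among those particular three.

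The paper's construction resolves exactly this difficulty by using \emph{eight} interchangeable clusters of size $\lceil 3k/4\rceil-1$ each, arranged so that \emph{any} three of them can be cut out by the union of one horizontal and one vertical strip while excluding the other five. Then the count of (cluster, missing color) pairs exceeds $8\cdot k/4=2k$, so some color is missing from at least three clusters, and those three clusters form a non-polychromatic edge of size $3\left(\left\lceil \frac{3}{4}k\right\rceil-1\right)$. The essential ingredients your plan lacks are (i) enough clusters to make the pigeonhole work for a triple rather than a pair, and (ii) the geometric flexibility that the non-polychromatic triple can be \emph{any} triple, not one containing a prescribed block.
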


We note that in the case of two colors, Theorem \ref{kereszt} yields $4\le m_{\A^+}(2)\le5$. However, a simple construction shows that in fact the upper bound is sharp, that is, $m_{\A^+}(2)=5$.

\begin{theorem}\label{s_strip}
$$m_{\A_s}(k)=s \cdot m_\A(k)-s+1.$$
\end{theorem}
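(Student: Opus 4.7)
The plan is to prove the two inequalities $m_{\A_s}(k) \le s \cdot m_\A(k) - s + 1$ and $m_{\A_s}(k) \ge s \cdot m_\A(k) - s + 1$ separately. The upper bound comes from pigeonholing on the $s$ strips of each edge, and the lower bound from taking many disjoint diagonal copies of a bad hypergraph from $\A$ and pigeonholing on the $k$ colors.

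For the upper bound, let $H = (V, E) \in \A_s$ have all edges of size at least $s(m_\A(k)-1) + 1$, and consider the hypergraph $H' \in \A$ on the same vertex set $V$ whose edges are all subsets of $V$ captured by a single axis-parallel strip. By the definition of $m_\A(k)$, $H'_{\ge m_\A(k)}$ admits a polychromatic $k$-coloring $\chi$. For each edge $e = V \cap (A_1 \cup \dots \cup A_s)$ of $H$, set $e_i := V \cap A_i$; then $|e| \le \sum_i |e_i|$, so pigeonhole forces some $|e_i| \ge m_\A(k)$. Since $e_i \in E(H')$ is of size at least $m_\A(k)$, it sees all $k$ colors under $\chi$, and hence so does $e \supseteq e_i$.

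For the lower bound, by the definition of $m_\A(k)$ there exists $H^* = (V^*, E^*) \in \A$ such that $H^*_{\ge m_\A(k)-1}$ is not polychromatic $k$-colorable. Place $N := k(s-1)+1$ translated and scaled copies $V_1, \dots, V_N$ of $V^*$ along a diagonal so that the bounding boxes of the $V_i$ have pairwise disjoint projections on both the $x$- and $y$-axis, and let $\pi_i \colon V^* \to V_i$ be the corresponding bijection. Because the bounding boxes are separated in both coordinates, the image $\pi_i(e^*)$ of any strip-edge $e^* \in E^*$ is captured by a single axis-parallel strip confined to the bounding box of $V_i$, so the union of $s$ such images from $s$ distinct copies is captured by a union of $s$ axis-parallel strips and is therefore an edge of the combined $\A_s$-hypergraph $H$ on $V := V_1 \sqcup \dots \sqcup V_N$. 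Now given any $k$-coloring $\chi \colon V \to \{1, \dots, k\}$, for each $i$ the restriction $\chi \circ \pi_i$ fails to polychromatically $k$-color $H^*$, yielding a bad edge $e_i^* \in E(H^*_{\ge m_\A(k)-1})$ missing some color $c_i \in \{1, \dots, k\}$. Since $N = k(s-1)+1$, pigeonhole on $c_1, \dots, c_N$ gives a color $c$ that appears as $c_i$ for at least $s$ indices $i_1, \dots, i_s$, and the union $e := \pi_{i_1}(e_{i_1}^*) \sqcup \dots \sqcup \pi_{i_s}(e_{i_s}^*)$ is an edge of $H$ of size at least $s(m_\A(k)-1)$ containing no vertex of color $c$. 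Hence $\chi$ is not polychromatic on $H_{\ge s(m_\A(k)-1)}$, giving $m_{\A_s}(k) \ge s(m_\A(k)-1) + 1$.

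The main subtlety is the geometric arrangement: the diagonal placement with pairwise disjoint $x$- and $y$-projections of the copies' bounding boxes is essential, because it is precisely what ensures each $\pi_{i_j}(e_{i_j}^*)$ is realized as a single-strip edge of $H$, which in turn makes the combinatorial union $e$ a valid $\A_s$-edge and lets the color-pigeonhole argument go through.
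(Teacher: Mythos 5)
Your proof is correct and follows essentially the same route as the paper's: the upper bound reuses the polychromatic coloring for $\A$ plus a pigeonhole over the $s$ strips of an edge, and the lower bound places $k(s-1)+1$ diagonal copies of a worst-case $\A$-hypergraph and pigeonholes over the missing colors. Your version is slightly more explicit about why the disjoint $x$- and $y$-projections of the copies make each single-strip edge survive in the combined hypergraph, which the paper leaves implicit.
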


Combining Theorem \ref{s_strip} with the bounds from \cite{apstrip} imply the following:

\begin{corollary}
$2s(\lceil \frac{3}{4} k\rceil-1)+1\le m_{\A_s}(k)\le s(2k-2)+1$, in particular $m_{\A_s}(2)=2s+1$, and $m_{\A_s}(3)=4s+1.$
\end{corollary}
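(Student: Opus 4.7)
The plan is to obtain the corollary as a direct consequence of Theorem~\ref{s_strip} combined with the bounds on $m_\A(k)$ cited earlier from \cite{apstrip}, namely
$$2\left\lceil \tfrac{3k}{4}\right\rceil - 1 \;\le\; m_\A(k) \;\le\; 2k-1.$$
Since Theorem~\ref{s_strip} asserts the exact reduction $m_{\A_s}(k) = s\cdot m_\A(k) - s + 1$, any upper or lower bound on $m_\A(k)$ translates mechanically into a bound on $m_{\A_s}(k)$. There is no genuine obstacle; the main point is just careful bookkeeping to check that the resulting expressions match the two closed forms stated in the corollary.

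First I would substitute the upper bound $m_\A(k)\le 2k-1$ into the identity of Theorem~\ref{s_strip}, which yields
$$m_{\A_s}(k) \;\le\; s(2k-1) - s + 1 \;=\; 2sk - 2s + 1 \;=\; s(2k-2) + 1,$$
giving the claimed upper bound. Substituting the lower bound $m_\A(k) \ge 2\lceil 3k/4\rceil - 1$ analogously gives
$$m_{\A_s}(k) \;\ge\; s\!\left(2\left\lceil \tfrac{3k}{4}\right\rceil - 1\right) - s + 1 \;=\; 2s\left\lceil \tfrac{3k}{4}\right\rceil - 2s + 1 \;=\; 2s\!\left(\left\lceil \tfrac{3k}{4}\right\rceil - 1\right) + 1,$$
matching the claimed lower bound.

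For the exact values at $k=2$ and $k=3$, I would observe that the two bounds on $m_\A(k)$ from \cite{apstrip} happen to coincide for these small values. Indeed, for $k=2$ we have $2\lceil 3/2\rceil - 1 = 3 = 2\cdot 2 - 1$, so $m_\A(2) = 3$; and for $k=3$ we have $2\lceil 9/4\rceil - 1 = 5 = 2\cdot 3 - 1$, so $m_\A(3) = 5$. Plugging these exact values into Theorem~\ref{s_strip} gives
$$m_{\A_s}(2) = 3s - s + 1 = 2s+1, \qquad m_{\A_s}(3) = 5s - s + 1 = 4s+1,$$
as claimed. The whole corollary thus reduces to two arithmetic substitutions and the two small-$k$ coincidences of the known bounds on $m_\A(k)$.
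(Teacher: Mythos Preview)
Your proof is correct and is exactly the argument the paper has in mind: the corollary is stated without proof immediately after Theorem~\ref{s_strip}, and the intended derivation is precisely the substitution of the bounds $2\lceil 3k/4\rceil - 1 \le m_\A(k) \le 2k-1$ from \cite{apstrip} into the identity $m_{\A_s}(k) = s\cdot m_\A(k) - s + 1$, together with the observation that these bounds coincide at $k=2,3$.
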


\subsection{Arithmetic progressions}

A famous result of van der Waerden states that for any finite coloring of $\mathbb{N}$ there exists an arbitrarily long monochromatic arithmetic progression, or in other words, if we take the hypergraph on $\mathbb{N}$ with finite arithmetic progressions as edges, then every finite coloring contains a monochromatic hyperedge of arbitrary size.

There are many well-studied related problems, one of them being how does a {\it ladder}, a set $L\subseteq\mathbb{N}$ such that all finite colorings of $\mathbb{N}$ contain arbitrarily long arithmetic progressions with difference $d\in L$ look like \cite{ladder1, ladder2}. Translated again to the hypergraph terminology, let $H^L$ be the hypergraph obtained from the previously mentioned hypergraph by keeping only those edges which are derived from arithmetic progressions with difference from some subset $L\subseteq \mathbb{N}$,  then $L$ is a ladder if there is no $m\in \N$ such that $(H^L)_m$ is properly colorable with finitely many colors. Another related and already examined question concerning polychromatic edges is the following: what is the smallest $r$ such that every $r$-coloring of $\{1,2,\ldots,n\}$ contains a polychromatic arithmetic progression of size $k$? This is usually referred to as the {\it anti-van der Waerden number} aw$([n],k)$, which is also studied in the case of Abelian groups instead of $[n]$, see e.g. \cite{aw1, aw2}.

We have considered the polychromatic colorability of these, and also of somewhat more general hypergraphs. An additional motivation for this is a connection between the family $\mathcal{B}$ of geometric hypergraphs induced by bottomless rectangles and a special case of the arithmetic progression hypergraphs (Theorem \ref{veges_sorozat}. (1), $M=\{0\}$), realized by Keszegh and Pálvölgyi \cite{temavezetes}. In this paper some more connections between geometric hypergraph families and hypergraph families induced by arithmetic progressions are shown, leading to bounds for the parameter $m_k$ of these hypergraphs.

We denote by $\mathbb{N}$ the set of all natural numbers (including $0$), and $\mathbb{N}^+=\mathbb{N}\setminus\{0\}$.

\begin{definition} 
Let $D,M \subseteq \mathbb{N}$ and denote by $\mathcal{A}_{D,M}$ the family of hypergraphs $A$ such that $S:=V(A) \subseteq \mathbb{N}$ is a finite set, and 
$E(A) \subseteq \{\{a_n\}_{n=0}^{k} \cap S:\{a_n\}_{n=0}^{k} \hspace{0.2 cm}\text{is a finite arithmetic} \newline \text{progression with difference} \hspace{0.2 cm}d \in D\hspace{0.2 cm} \text{such that}\hspace{0.2 cm}\exists m\in \N: a_0-md\in M\}.$

Denote by $\mathcal{A}_{D,M}^{\infty}$ the family of hypergraphs $A$ such that $S:=V(A) \subseteq \mathbb{N}$ is a finite set and $E(A) \subseteq \{\{a_n\}_{n=0}^{\infty} \cap S:\{a_n\}_{n=0}^{\infty} \hspace{0.2 cm}\text{is an infinite arithmetic progression with difference} \hspace{0.2 cm}d \in D\hspace{0.2 cm} \text{such that}\hspace{0.2 cm}\exists m\in \N: a_0-md\in M \}.$
\end{definition}

For the sake of simplicity we use the notations
$m_{D,M}(k):=m_{\mathcal{A}_{D,M}}(k)$ and
$m_{D,M}^\infty(k):=m_{\mathcal{A}_{D,M}^{\infty}}(k)$.

If $D$ can be written in the form $\{\prod_{i=1}^n t_i^{j_i}: j_1,\ldots,j_n\in \mathbb{N}\}$ for some $t_1,\ldots,t_n$ integers, we can determine in most cases whether $m_{D,M}(k)$ and $m_{D,M}^\infty(k)$ are finite or not. We do this with the help of the already defined family $\mathcal{B}$ and the following hypergraph families:

\begin{definition} Denote by $\RR$ the hypergraph family which consists of hypergraphs $H=(V,E)$ such that $V\subset \R^2$ is finite and the edges are induced by axis-parallel rectangles: every $e\in E$ edge can be written in the form $e=V\cap\big(\{(x,y)\in\R^2: y_0\le y \le y_1, x_0\le x\le x_1, \}\big)$ for some $x_0,x_1,y_0,y_1\in\R$. 
\end{definition}

\begin{definition} 
Denote by $\mathcal{T}_3$ the hypergraph family which consists of hypergraphs $H =(V,E)$ such that $V \subset \mathbb{R}^3$ is finite and the edges are induced by translates of octants: every $e\in E$ edge can be written in the form
\[
e = V \cap \{(x,y,z):x_0 \le x,y\le y_0, z\le z_0\}
\]
some $x_0,y_0,z_0 \in \mathbb{R}$. The point $(x_0,y_0,z_0)$ is called the corner of the octant.
\end{definition}

\begin{definition} 
Denote by $\mathcal{T}_4$ the hypergraph family which consists of hypergraphs $H =(V,E)$ such that $V \subset \mathbb{R}^4$ is finite, and the edges are induced by translates of hextants: every $e\in E$ edge can be written in the form
\[
e = V \cap \{(x,y,z,w):x_0 \le x,y\le y_0, z\le z_0, w\le w_0\}
\]
some $x_0,y_0,z_0,w_0 \in \mathbb{R}$.
\end{definition}

Keszegh and Pálvölgyi showed that $m_{\mathcal{T}_3}(k)\le O(k^{5,09})$ \cite{ternyolcad}, and it is known that $m_\RR(k)=\infty$ (in fact, there is not even an integer $m$ such that the elements of $\RR_{=m}$ can be properly $k$-colored \cite[Theorem 3.]{teglalap})
Using this and a remark of Cardinal, Keszegh and Pálvölgyi proved that there is no $m$ such that the elements of $(\mathcal{T}_4)_{=m}$ can all be $k$-colored \cite[Theorem 12.]{hextant}, which yields $m_{\mathcal{T}_4}(k)=\infty$.

These hypergraph families are connected to the hypergraph families defined by arithmetic progressions in the following way: for suitable $C,D$ the vertices of $H\in\A_{C,D}$ or $H\in\A_{C,D}^\infty$ can be put into correspondence with the points of the 2-, 3-, or 4 dimensional space in such a way that the resulting set of points contains all hyperedges as an edge of a geometric hypergraph on this set, or conversely, we define a vertex set $S\subset \mathbb{N}$ while preserving all the edges.
This gives the possibility to improve the colorings of geometric hypergraphs by using hypergraphs induced by arithmetic progressions, and vice versa.

Our results regarding hypergraphs induced by infinite arithmetic progressions are as follows:

\begin{theorem}\label{vegtelen_sorozat}

\begin{enumerate}
    \item 
    Let $D := \{t^i:i \in \mathbb{N}\}$ for some $t \in \mathbb{N} \setminus \{0,1\}$, then
    \[
    m_{D, \mathbb{N}}^{\infty}(k) \leq m_{\mathcal{T}_3}(k).
    \]
    More generally, this also holds if $D = \{d_i: i\in\N, d_i \in \mathbb{N}^+, d_{i-1} | d_{i}\}$, $d_0:=1$.
    \item 
    Let $p,q$ be positive coprime integers, and suppose $M$ contains at most one element of every residue class modulo $pq$, then
    \[
    m_{\{p^iq^j:i,j \in \mathbb{N}\}, M}^{\infty}(k) \leq \max(p,q)\cdot m_{\mathcal{T}_3}.
    \]
    Moreover, if $|M|=1$, then $m_{\{p^iq^j:i,j \in \mathbb{N}\}, M}^{\infty}(k) = m_{\mathcal{T}_3}$.
    \item 
    Let $p_1, p_2, p_3$ be positive coprime integers, then 
    \[
    m_{\left\{ p_1^{j_1}p_2^{j_2}p_3^{j_3}:j_1,j_2,j_3 \in \mathbb{N}\right\},\{0\}}^{\infty}(k) = \infty.
    \]

\end{enumerate}
\end{theorem}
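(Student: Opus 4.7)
The overall plan is to translate hypergraphs induced by infinite arithmetic progressions into geometric hypergraphs in $\mathcal{T}_3$ or $\mathcal{T}_4$, so that the polychromatic behaviour is imported from the geometric world.

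For (1), I would embed $\mathbb{N}$ into $\mathbb{R}^3$ via the digit-reversal map $\phi(n):=(n,\rho_t(n),-\rho_t(n))$, where $\rho_t(n):=\sum_{j\ge 0} c_j(n)\,t^{-j-1}$ for the base-$t$ expansion $n=\sum_{j\ge 0} c_j(n)\,t^j$. The congruence $n\equiv a_0\pmod{t^i}$ is then equivalent to $\rho_t(n)$ lying in the half-open interval $[R/t^i,(R+1)/t^i)$, where $R$ encodes the low $i$ base-$t$ digits of $a_0$ in reverse order; this interval decomposes as one $\le$-constraint on the second coordinate and one on the third, and together with the $\ge$-constraint $n\ge a_0$ supplied by the first coordinate every infinite AP with difference $t^i$ becomes exactly an octant-induced edge. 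Hence every $H\in\mathcal{A}_{D,\mathbb{N}}^{\infty}$ sits inside a $\mathcal{T}_3$ hypergraph on $\phi(V)$, giving $m_{D,\mathbb{N}}^{\infty}(k)\le m_{\mathcal{T}_3}(k)$.

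For the upper bound in (2), I would use $|M|$ shifted embeddings $\phi_\mu(n):=(n,-v_p(n-\mu),-v_q(n-\mu))$, where $v_p,v_q$ are the $p$-adic and $q$-adic valuations. An AP $\{a_0,a_0+d,\ldots\}$ with $d=p^iq^j$ and $a_0\equiv\mu\pmod d$ becomes the octant $\{x\ge a_0,\,y\le -i,\,z\le -j\}$ under $\phi_\mu$, so the sub-hypergraph $H_\mu\subseteq H$ consisting of edges associated with $\mu$ embeds into the $\mathcal{T}_3$ hypergraph $G_\mu$ on $\phi_\mu(V)$. Partition $V=\bigsqcup_{\mu\in M} V_\mu$ by residue modulo $pq$ (vertices whose residue class is not realised in $M$ are distributed among the $V_\mu$ in a balanced way), choose for each $\mu$ a polychromatic $k$-coloring $c_\mu$ of $(G_\mu|_{V_\mu})_{\ge m_{\mathcal{T}_3}(k)}$ -- which exists because $\mathcal{T}_3$ is hereditary under vertex deletion -- and glue them into a single coloring $c$. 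For an edge $e$ with $|e|\ge|M|(m_{\mathcal{T}_3}(k)-1)+1$, a pigeonhole argument over the $|M|$ parts finds some $\mu$ with $e$ also associated with $\mu$ and $|e\cap V_\mu|\ge m_{\mathcal{T}_3}(k)$; the restriction $e\cap V_\mu$ is still an octant capture of $V_\mu$, hence an edge of $G_\mu|_{V_\mu}$ on which $c_\mu$, and therefore $c$, is polychromatic. For the matching lower bound in the case $M=\{0\}$, I would reverse the correspondence: given any $\mathcal{T}_3$ hypergraph with vertex set in $\{0,\ldots,N-1\}^3$, set $\psi(x,y,z):=r^{cx}\,p^{N-1-y}\,q^{N-1-z}$ with $r\ge 2$ coprime to $pq$ and $c$ chosen so that $r^c>(pq)^{N-1}$. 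Each octant $\{x\ge x_0,\,y\le y_0,\,z\le z_0\}$ is then realised by the AP in $\mathcal{A}_{\{p^iq^j\},\{0\}}^{\infty}$ with difference $d=p^{N-1-y_0}q^{N-1-z_0}$ and starting point $r^{cx_0}\cdot d$ (a multiple of $d$): divisibility by $d$ encodes the $y$- and $z$-inequalities because $r$ is coprime to $pq$, and the rapid growth of $r^{cx}$ converts the threshold on $\psi$ into the condition $x\ge x_0$.

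For (3), I would run the analogue of the lower-bound construction from (2) with three primes: take $\psi(x,y_1,y_2,y_3):=r^{cx}\,p_1^{N-1-y_1}\,p_2^{N-1-y_2}\,p_3^{N-1-y_3}$ with $r$ coprime to $p_1p_2p_3$ and $c$ large. Every hextant in a $\mathcal{T}_4$ hypergraph with vertex set in $\{0,\ldots,N-1\}^4$ is then realised by an AP in $\mathcal{A}_{\{p_1^{j_1}p_2^{j_2}p_3^{j_3}\},\{0\}}^{\infty}$ via the three-way divisibility combined with the $\psi$-growth argument, so the fact $m_{\mathcal{T}_4}(k)=\infty$ quoted in the introduction forces $m_{\{p_1^{j_1}p_2^{j_2}p_3^{j_3}\},\{0\}}^{\infty}(k)=\infty$. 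The step I expect to be most delicate is the combination in (2)'s upper bound: while each $\phi_\mu$-embedding handles APs associated with $\mu$, the pigeonhole must simultaneously exhibit a part $V_\mu$ with $|e\cap V_\mu|\ge m_{\mathcal{T}_3}(k)$ \emph{and} with $\mu\in M(e):=\{\mu\in M:a_0\equiv\mu\pmod d\}$. The tight edges here are those with small difference $d\in\{1,p,q,\ldots\}$, for which the AP intersects many residue classes mod $pq$ -- several possibly not represented in $M$ -- so the assignment of the free vertices to the parts and the exact form of the pigeonhole have to be arranged carefully not to lose too much mass to residues outside $M(e)$.
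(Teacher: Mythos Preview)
Your plan is close to the paper's in spirit and essentially identical for the lower bound in (2) and for (3): both encode a $\mathcal T_3$- (resp.\ $\mathcal T_4$-) vertex as an integer whose $p$-, $q$-(, $p_3$-)adic valuations record three of the coordinates, with the remaining coordinate controlled by magnitude; the paper chooses an ad hoc multiplier $k$ vertex-by-vertex, you use an extra coprime prime power $r^{cx}$, and these are interchangeable.

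For (1) you take a genuinely different route. The paper builds a recursive system of nested diagonal squares in the $y$--$z$ plane, one square per natural number, indexed by the positions of the nonzero base-$t$ digits; an AP of difference $t^i$ is then cut out by the bounding lines of two suitably chosen squares. Your digit-reversal map $\rho_t$ achieves the same end by a closed formula: the congruence class mod $t^i$ is read off from the leading block of $\rho_t(n)$, turning it into an interval condition that the pair $(\rho_t(n),-\rho_t(n))$ converts into two one-sided inequalities. Your version is slicker; the paper's version has the mild advantage that it generalises verbatim to arbitrary divisibility chains $D=\{d_i:d_{i-1}\mid d_i\}$.

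For the upper bound in (2) with $M=\{0\}$ you and the paper do the same thing (valuation coordinates), just scaled differently. For general $M$ the two approaches diverge: the paper uses a \emph{single} embedding $\psi$ that places each residue class mod $pq$ in its own diagonally shifted unit square on the $y$--$z$ plane, while you use $|M|$ separate embeddings $\phi_\mu$ and a vertex partition. Your flagged concern is exactly right and is not an artefact of your set-up: for $d$ a pure $p$-power or $q$-power the edge spreads over several residue classes mod $pq$, not all represented in $M$, so plain pigeonholing over $|M|$ parts need not land on a $\mu\in M(e)$. The paper's proof is equally terse at this point (it only names $d\in\{1,p,q\}$, leaving $d=p^i$ or $q^j$ with exponent $\ge 2$ unaddressed, and asserts without argument that the pigeonhole hits ``some $r\in M$''), so you are not overlooking a trick the paper supplies. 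The single-embedding route does buy one thing yours does not: under $\psi$, \emph{every} AP of difference divisible by $pq$---from any residue class, not just those in $M$---is an octant, so one can split a small-$d$ edge into sub-APs of difference $pq\cdot d/\gcd(d,pq)$ and pigeonhole over those; this at least yields a complete argument with $p$, $q$ or $pq$ in place of $|M|$, albeit weaker than the stated bound.
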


For the case of finite arithmetic progression we have the following:

\begin{theorem}\label{veges_sorozat}
\begin{enumerate}
    \item 
    Suppose that $M$ contains at most one element of every residue class modulo $t$, then
    \[
    m_{\{t^i:i \in \mathbb{N}\}, M}(k) \leq |M|(m_{\mathcal{B}}(k) -1) + 1.
    \]
    \item 
    Let $p,q$ be positive coprime integers, then
    \[
    m_{\{p^iq^j:i,j \in \mathbb{N}\},\{0\}}(k) = \infty.
    \]
\textbf{\textbf{}}\end{enumerate}
\end{theorem}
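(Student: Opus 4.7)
My plan is to prove both parts by reducing to a known result about a geometric hypergraph family, via an explicit embedding. For (1), given $H = (V,E) \in \mathcal{A}_{\{t^i\},M}$, I would partition $V$ into classes $V_a = \{n \in V : n \equiv a \pmod{t}\}$ for $a \in M$. Since $M$ meets each residue class modulo $t$ at most once, these are pairwise disjoint; the vertices whose residue modulo $t$ is not represented in $M$ lie in no AP edge of difference $t^i$ with $i \geq 1$, so they can be placed into any one fixed $V_{a^\ast}$. Define $\phi_a \colon V_a \to \R^2$ by $\phi_a(n) = (n, -\nu_t(n-a))$ with $\nu_t(0) := +\infty$. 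A short case analysis shows that every induced edge $e \cap V_a$ is the intersection of $\phi_a(V_a)$ with a bottomless rectangle: an AP of difference $t^i$ with $i \geq 1$ starting at $a + mt^i$ uses top coordinate $-i + \tfrac12$, and interval edges (difference $1$) use any $y_0 > 0$. Consequently each $H|_{V_a}$ is isomorphic to an element of $\B$, so can be polychromatically $k$-colored whenever its edges have size at least $m_\B(k)$. Combining these $|M|$ independent colorings on the disjoint classes: if $e \in E$ has $|e| \geq |M|(m_\B(k)-1)+1$, pigeonhole gives $|e \cap V_a| \geq m_\B(k)$ for some $a$, so the combined coloring is polychromatic on $e$.

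For (2), I would realize bad $\RR$-hypergraphs as elements of $\mathcal{A}_{\{p^iq^j\},\{0\}}$ via an exponential embedding. The paper records that $\RR_{=m}$ contains a hypergraph with no proper $2$-coloring for every $m$; take such a bad hypergraph with vertex set $W \subseteq \{0,\ldots,N\}^2$ (integer coordinates can be arranged by an order-preserving rescaling) and all edges of size $\geq m$. Assuming WLOG $p < q$, pick an integer $B$ coprime to $pq$ with $B > (q/p)^N$, and map each $(x,y) \in W$ to
\[
n(x,y) := p^y \, q^{N-y} \, B^x,
\]
so $\nu_p(n(x,y)) = y$ and $\nu_q(n(x,y)) = N-y$. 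The axis-parallel rectangle $[a,b] \times [c,d]$ corresponds to the arithmetic progression $\{\ell \cdot p^c q^{N-d} : \ell \in [p^{d-c}B^a, q^{d-c}B^b] \cap \mathbb{Z}\}$, which has difference $p^c q^{N-d}$ and starts at the multiple $p^d q^{N-d} B^a$ of this difference (so it satisfies the $M=\{0\}$ condition). Divisibility by $p^c q^{N-d}$ forces $c \leq y \leq d$, while the bounds on $\ell$ combined with $B > (q/p)^N$ force $a \leq x \leq b$. This realizes the bad hypergraph as an element of $(\mathcal{A}_{\{p^iq^j\},\{0\}})_{\geq m}$ with no proper $2$-coloring; since $m_\HH(2) \leq m_\HH(k)$ for every $k \geq 2$, we obtain $m_{\{p^iq^j\},\{0\}}(k) = \infty$.

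The main obstacle is verifying the rectangle-to-AP correspondence in (2). The critical point is that the single inequality $B > (q/p)^N$ uniformly excludes spurious vertices over all rectangles: for $x < a$, the largest possible $\ell$ (attained at $y=c$) is at most $q^{d-c}B^{a-1}$, which must stay below $p^{d-c}B^a$; the case $x > b$ is symmetric. Both conditions reduce to $B > (q/p)^{d-c}$, which holds since $d-c \leq N$.
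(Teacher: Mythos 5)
Both parts of your argument are correct. Part (1) is essentially the paper's proof in a slightly different packaging: the paper also embeds each residue class via a $t$-adic-valuation coordinate into a bottomless-rectangle hypergraph (it uses $\phi(n)=(1-\frac1n,\frac{1}{t(n)})$ and places the $t$ residue classes in horizontally disjoint bottomless rectangles, then applies the same pigeonhole over $|M|$ classes), whereas you partition the vertex set and color each class independently; your explicit treatment of the difference-$1$ edges and of the vertices whose residue is not represented in $M$ is, if anything, a little more careful than the paper's. Part (2) takes a genuinely different route. The paper first introduces the auxiliary family $\mathcal{T}_{fin}$ of $x$-truncated octants, shows $m_{\mathcal{T}_{fin}}(k)=\infty$ by slicing $\R^3$ with a plane of normal $(0,1,1)$ to recover $\RR$, and then reuses the map $\gamma(P)=p^{y(P)}q^{z(P)}k$ from the proof of Theorem \ref{vegtelen_sorozat} (2) to pull this back to arithmetic progressions, using that a truncated octant cuts off a prefix of the infinite progression. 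You instead embed a non-$2$-colorable member of $\RR$ directly via $(x,y)\mapsto p^yq^{N-y}B^x$ with $B$ coprime to $pq$ and $B>(q/p)^N$; your verification that the rectangle $[a,b]\times[c,d]$ corresponds exactly to the consecutive multiples of $p^cq^{N-d}$ between $p^{d-c}B^a\cdot p^cq^{N-d}$ and $q^{d-c}B^b\cdot p^cq^{N-d}$ is sound (the $y$-range is enforced by divisibility, the $x$-range by the size of $B$, and the starting term is a multiple of the difference, so the $M=\{0\}$ condition holds). Your version is more self-contained and yields an explicit arithmetic witness, at the cost of not exposing the intermediate geometric family $\mathcal{T}_{fin}$, which the paper's route makes available for reuse.
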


The proof of case $M=\{0\}$ in $(1)$ is by Keszegh and Pálvölgyi \cite{temavezetes}.

\section{Proofs}

\noindent{\it Notation.}
Directions in the plane are denoted by north, west, south, and east.

\begin{proof} [Proof of Lemma \ref{prop1}.] 
Let $H\in\HH$ be arbitrary, we need to show that $H_{\ge c(k-1)+1}$ can be colored polychromatically with $k$ colors.

First shrink all edges of $H_{\ge c(k-1)+1}$ to be of size exactly $c(k-1)+1$ in such a way that the resulting graph is in $\HH$, we can do this by our assumptions.
Then choose a $c$-shallow hitting set from our hypergraph, color its points to one fixed color, and leave out these vertices from the graph. The resulting graph is still an element of the hypergraph family, and all of its edges can be shrinked to contain exactly $c(k-2)+1$ vertices, and so that we get a graph from $\HH_{=c(k-2)+1}$. Then take another $c$-shallow hitting set, color it to the second color, and so on. After determining this way the first $k-1$ color classes, we still have at least one remaining vertex in every edge, color the remaining vertices to the $k$th color to get a desired $k$-coloring.
\end{proof}

\begin{proof} [Proof of Theorem~\ref{bottomless}] 

Take $\B_{=m}$ with $m\ge 12$. We will show that this family has an element with no 3-shallow hitting set.
First take $m$ points, $P=\{p_1,\ldots,p_m\}$ along a horizontal line, and above each of them, take the construction on Figure \ref{fig:haromsek}.


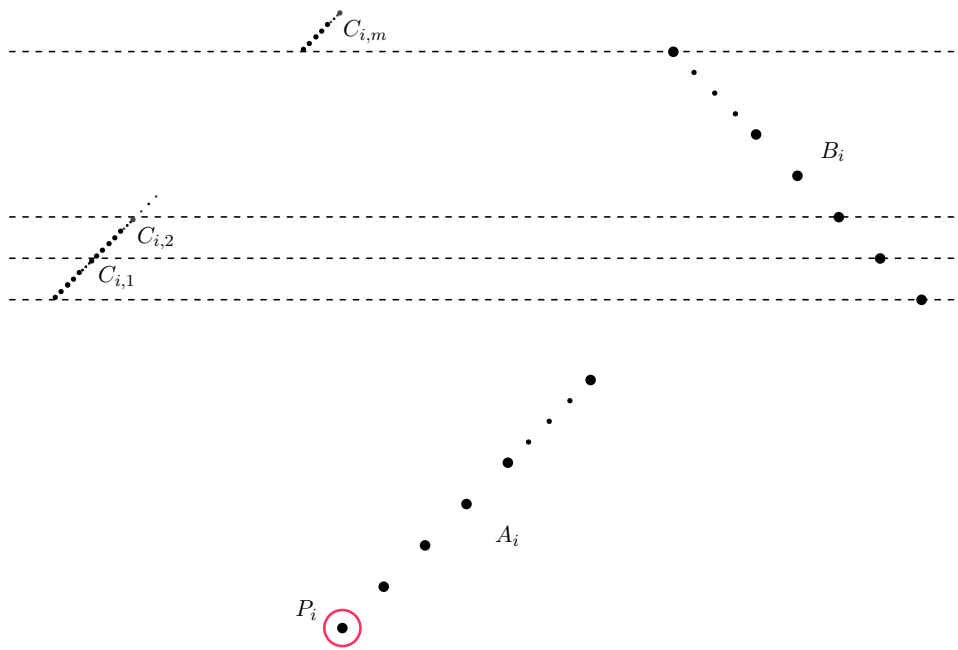
\begin{figure}[h!]
    \centering
    \begin{tikzpicture}[line cap=round,line join=round,>=triangle 45,x=1.1cm,y=1.1cm]
\clip(-4.02,-0.38) rectangle (7.54,7.76);
\draw [line width=0.6pt,dash pattern=on 2pt off 3pt,domain=-4.02:7.54] plot(\x,{(--13.9-0*\x)/3.5});
\draw [line width=0.6pt,dash pattern=on 2pt off 3pt,domain=-4.02:7.54] plot(\x,{(-42.49-0*\x)/-9.5});
\draw [line width=1pt,color=ffttww] (0,0) circle (0.24cm);
\draw (1.75,1.33) node[anchor=north west] [scale=0.8]{$A_i$};
\draw (-3.05,4.47) node[anchor=north west] [scale=0.8]{$C_{i,1}$};
\draw (-2.58,4.95) node[anchor=north west] [scale=0.8]{$C_{i,2}$};
\draw (-0.09,7.45) node[anchor=north west] [scale=0.8]{$C_{i,m}$};
\draw (5.68,5.98) node[anchor=north west] [scale=0.8]{$B_i$ };
\draw (-0.2,0) node[anchor=south east] 
[scale=0.8]{$P_i$ };
\draw [line width=0.6pt,dash pattern=on 2pt off 3pt,domain=-4.02:7.54] plot(\x,{(-47.24-0*\x)/-9.5});
\draw [line width=0.6pt,dash pattern=on 2pt off 3pt,domain=-4.02:7.54] plot(\x,{(-66.24-0*\x)/-9.5});
\begin{scriptsize}
\fill [color=black] (0,0) circle (2pt);
\fill [color=black] (1,1) circle (2pt);
\fill [color=black] (0.5,0.5) circle (2pt);
\fill [color=black] (6.5,4.47) circle (2pt);
\fill [color=black] (-3.47,4) circle (1.0pt);
\fill [color=black] (3,3) circle (2pt);
\fill [color=black] (7,3.97) circle (2pt);
\fill [color=black] (-3.25,4.22) circle (1.0pt);
\fill [color=black] (-3.03,4.44) circle (1.0pt);
\fill [color=black] (2,2) circle (2pt);
\fill [color=black] (1.5,1.5) circle (2pt);
\fill [color=black] (2.5,2.5) circle (1.0pt);
\fill [color=black] (2.75,2.75) circle (1.0pt);
\fill [color=black] (2.25,2.25) circle (1.0pt);
\fill [color=black] (4,6.97) circle (2pt);
\fill [color=black] (4.25,6.72) circle (1.0pt);
\fill [color=black] (4.5,6.47) circle (1.0pt);
\fill [color=black] (4.75,6.22) circle (1.0pt);
\fill [color=black] (5,5.97) circle (2pt);
\fill [color=black] (5.5,5.47) circle (2pt);
\fill [color=black] (6,4.97) circle (2pt);
\fill [color=black] (-3.4,4.07) circle (1.0pt);
\fill [color=black] (-3.32,4.15) circle (1.0pt);
\fill [color=black] (-3.18,4.3) circle (1.0pt);
\fill [color=uuuuuu] (-3.1,4.37) circle (0.5pt);
\fill [color=uuuuuu] (-3.14,4.33) circle (0.5pt);
\fill [color=black] (-3.1,4.37) circle (0.5pt);
\fill [color=black] (-3.06,4.41) circle (0.5pt);
\fill [color=black] (-3.14,4.33) circle (0.5pt);
\fill [color=black] (-2.97,4.5) circle (1.0pt);
\fill [color=black] (-2.9,4.57) circle (1.0pt);
\fill [color=black] (-2.82,4.65) circle (1.0pt);
\fill [color=black] (-2.75,4.72) circle (1.0pt);
\fill [color=black] (-2.68,4.8) circle (1.0pt);
\fill [color=uuuuuu] (-2.53,4.94) circle (1.0pt);
\fill [color=uuuuuu] (-2.6,4.87) circle (0.5pt);
\fill [color=uuuuuu] (-2.64,4.83) circle (0.5pt);
\fill [color=black] (-2.6,4.87) circle (0.5pt);
\fill [color=black] (-2.56,4.91) circle (0.5pt);
\fill [color=uuuuuu] (-2.43,5.04) circle (0.5pt);
\fill [color=black] (-2.34,5.13) circle (0.5pt);
\fill [color=uuuuuu] (-2.25,5.22) circle (0.5pt);
\fill [color=black] (-3.32,4.15) circle (1.0pt);
\fill [color=black] (-0.47,7) circle (1.0pt);
\fill [color=black] (-0.4,7.07) circle (1.0pt);
\fill [color=black] (-0.32,7.15) circle (1.0pt);
\fill [color=black] (-0.25,7.22) circle (1.0pt);
\fill [color=uuuuuu] (-0.14,7.33) circle (0.5pt);
\fill [color=black] (-0.18,7.3) circle (1.0pt);
\fill [color=uuuuuu] (-0.06,7.41) circle (0.5pt);
\fill [color=black] (-0.1,7.37) circle (0.5pt);
\fill [color=uuuuuu] (-0.03,7.44) circle (1.0pt);
\end{scriptsize}
\end{tikzpicture}
    \caption{Construction for Theorem~\ref{bottomless}.}
    \label{fig:haromsek}
\end{figure}

Starting from $p_i$ we have $A_i$ on a diagonal line consisting of $m-2$ points (including $p_i$) with increasing $x$ and $y$ coordinates, to northwest from $A_i$ we have $B_i=\{b_i^1,\ldots,b_i^m\}$ on a diagonal line with increasing $y$, but decreasing $x$ coordinates. To the west from $A_i$, we have $m^2$ points in a diagonal again with increasing $x$ and $y$ coordinates, consisting of $m$ smaller groups $C_{i,j}=\{c_{i,j}^1,\ldots,c_{i,j}^m\}$ (for $j=1,\ldots,m$) of size $m$, with $C_{i,j}$ being vertically between the $b_i^j$ and $b_i^{j+1}$. More precisely, let $(x_1,y_1)\in A_i,$ $(x_2,y_2)=b_i^k\in B_i,$ and $(x_3,y_3)\in C_{i,j}.$ Then $x_3<x_1<x_2$, $y_1<y_2$, $y_1<y_3,$ and we have $y_2<y_3$ exactly when $j\ge k.$

Let $H$ be the hypergraph on this set which contains all $m$-sets contained in a bottomless rectangle as hyperedges. Now for the sake of contradiction suppose that this hypergraph has a 3-shallow hitting set.

There is a point in $P$ which is in the hitting set, because these vertices form a hyperedge. Now we separate two cases:

Case 1. $A_i$ contains at least one further point from the hitting set besides $P$. $B_i$ consists of $m$ points, and can be separated by a bottomless rectangle, so it has a point from the hitting set, say $b_i^j$. We can choose an axis-parallel rectangle that contains exactly $b_i^j$ from $B_i$. Also in $C_{i,j}$ there must be a point from the hitting set, so we can choose the rectangle in a way that its top left corner contains exactly this point from $C_{i,j}$, say $c_{i,j}^l$. Now we got a bottomless rectangle with $m$ points in it, $m-2$ from $A_i$ with at least two points from the hitting set, and $b_i^j$ and $c_{i,j}^l$, both latter two from the hitting set. This is 4 in total; thus, the hitting set is not 3-shallow, a contradiction.

Case 2. $A_i$ does not contain any more points from the hitting set (apart from $p_i$). Now we can take a bottomless rectangle which contains the $m-3$ rightmost points of $A_i$ and any 3 neighbouring points from $B_i$. This implies that any 3 neighbouring points in $B_i$ contain a point from the hitting set, which means $B_i$ contains at least 4 points from the hitting set if $m\ge12$, which is a contradiction.
\end{proof}

\begin{proof}[Proof of Theorem~\ref{strip_dual}]

Take $\lceil \frac{3}{4}k\rceil-1$ copies each of the following axis-parallel strips: $\{(x,y):0<x<2\}$, $\{(x,y):1<x<3\}$, $\{(x,y):0<y<2\}$, and $\{(x,y):1<y<3\}$. Notice that any two of these has a part of their intersection which is disjoint from the other two original strips. (If we do not want vertices derived from the same strip, we can perturb them a little.)

Take that $H$ hypergraph from $\A^*$ which has these $4(\lceil \frac{3}{4}k\rceil-1)$ strips as vertices and all possible hyperedges determined by points. In any coloring of the strips, for each of the 4 original strips there exist strictly more than $\frac{k}{4}$ colors which do not appear in the copies of that strip. By the pigeonhole principle there are two original strips and a color excluded from copies of these two strips, hence $H$ has a hyperedge of size $2(\lceil \frac{3}{4}k\rceil-1)$ which is not polychromatic, implying $2\left(\left\lceil \frac{3}{4}k\right\rceil -1\right) + 1 \leq m_{\A^*}(k)$.
\end{proof}

\begin{proof}[Proof of Theorem~\ref{strip}]

We construct a hypergraph in $\A$ with no 2-shallow hitting sets. The main arrangement of some vertices can be seen on Figure \ref{abra1}.
\begin{figure} [h!]
    \centering
    \includegraphics{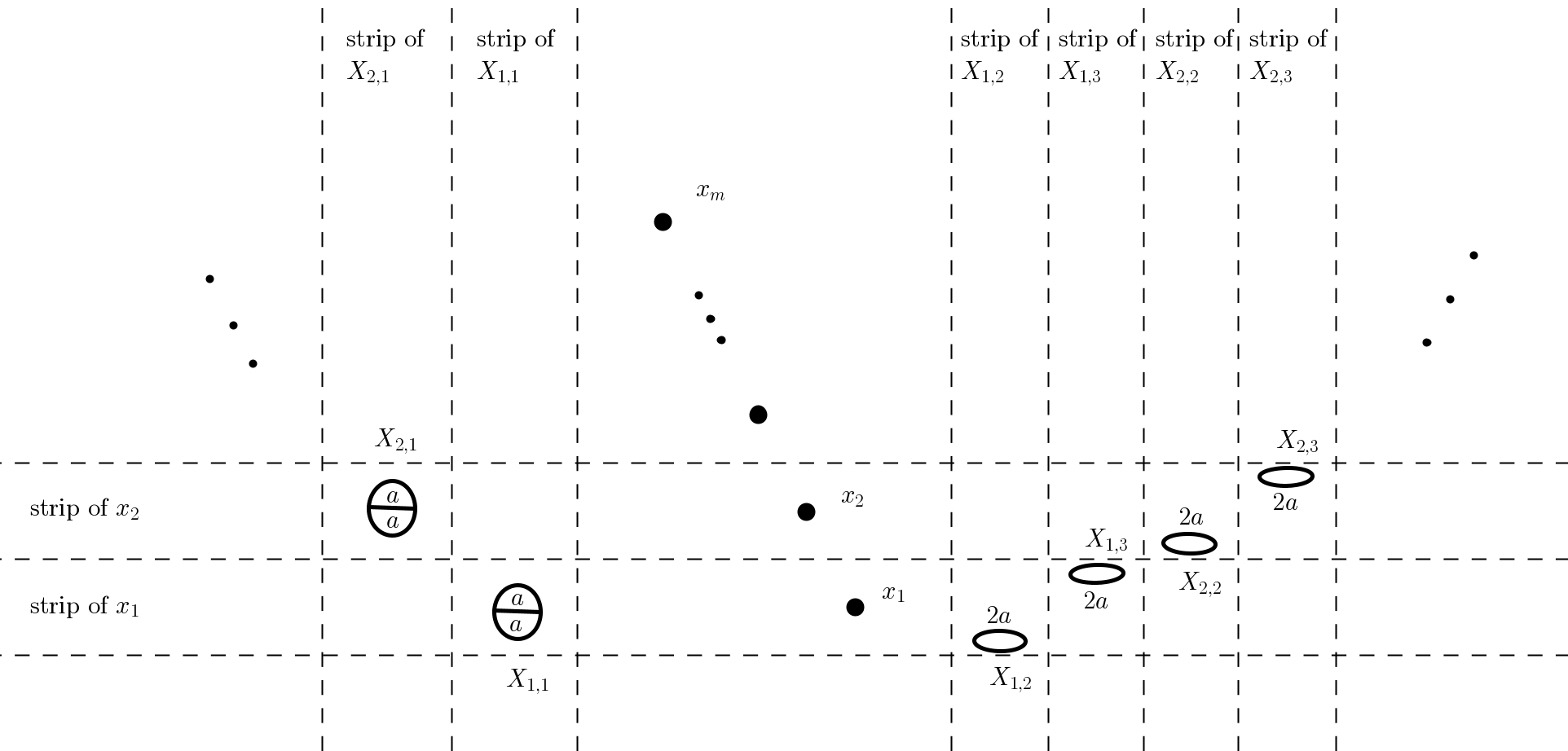}
    \caption{The construction for Theorem~\ref{strip}.}
\label{abra1}
\end{figure}

Let $m$ be large enough, let $a= \lceil \frac{m}{3} \rceil-1$, and $b$ be 8 for $m \equiv 0 \mod 3$, 4 for $m \equiv 1 \mod 3$, and 6 for $m \equiv 2 \mod 3$, this satisfies $m=3a+ \frac{b}{2}-1$. Later we will need that $m\ge 5b$.

First we take $X_0=\{x_1,\ldots,x_m\},$ a set of $m$ points on a southeast to northwest diagonal, as in the middle of Figure \ref{abra1}. We will leave empty a vertical strip containing them. Then we assign disjoint horizontal strips containing each $x_i$, and in all of these horizontal strips, we take 3 sets of points $X_{i,1},X_{i,2},X_{i,3}$ of size $2a$ as shown on Figure \ref{abra1}, reserving disjoint vertical strips containing each of these sets, and with all points from $X_{i,2}$ and $a$ points from $X_{i,1}$ having smaller $y$-coordinates than $x_i$, and all points from $X_{i,3}$ and the other $a$ points from $X_{i,1}$ having larger $y$-coordinates than $x_i$. 

More precisely, the following properties are needed:

Take $3m+1$ pairwise disjoint closed intervals $I_1,I_2,\ldots,I_{3m+1}$ on the $x$-axis, these correspond to the strips of $X_0$ and $\{X_{i,j}\}_{1\le i\le m, 1\le j\le3}$. Take $m$ pairwise disjoint intervals on the $y$-axis, $J_1,\ldots,J_m$, these correspond to the strips of $x_1,\ldots,x_m$. Let $X_0$ be a set of $m$ points such that $\mathrm{conv}(\mathrm{proj}_1(X_0))\subseteq I_{3m+1}$ and $\mathrm{proj}_2(x_i)\in\mathrm{int(J_i)} \, (1\le i\le m).$ Let $\{X_{i,j}\}_{1\le i\le m, 1\le j\le3}$ be sets of $2a$ points each such that $\mathrm{conv}(\mathrm{proj}_1(X_{i,j}))\subseteq I_{(i-1)\cdot3+j}$ and \\
$\text{conv}(\text{proj}_2(X_{i,j}))\subseteq J_i\cap (\text{proj}_2(x_i),\infty)\hfill\text{ if } j=3,$\\
$\text{conv}(\text{proj}_2(X_{i,j}))\subseteq J_i\cap (-\infty,\text{proj}_2(x_i))\hfill\text{ if } j=2,$\\
$|\text{conv}(\text{proj}_2(X_{i,j}))\cap J_i\cap (\text{proj}_2(x_i),\infty)|=|\text{conv}(\text{proj}_2(X_{i,j}))\cap J_i\cap (-\infty,\text{proj}_2(x_i))|=a\text{ if } j=1.$

It is enough to show that we can place additional points in the vertical strips in such a way that it forces the original set of $2a$ points to contain a point from a 2-shallow hitting set. Indeed, if this is the case, there is an element in $X_0$ from the hitting set, say $x_i,$ and $X_{i,1},X_{i,2},$ and $X_{i,3}$ each contain a point from the hitting set. Then there is a horizontal strip with $3a+1\le m$ vertices which contains at least 3 points from the hitting set, namely, $\R\times[\text{proj}_2(x_i),\max(J_i)]$ or $\R\times[\min(J_i),\text{proj}_2(x_i)]$, depending on which half of $X_{i,1}$ contains a point from the hitting set, a contradiction.

For each $X\in\{X_{i,j}: 1\le i\le m, \,1\le j\le3\}$, the construction of this arrangement in a vertical strip is shown on Figure \ref{gigasok}. These points are once again placed in separate horizontal strips for each $X_{i,j}$ (that is, the convex hulls of their projection to the $y$ axis are pairwise disjoint closed intervals, disjoint even with $J_1,\ldots,J_m$). Fix $X\in\{X_{i,j}: 1\le i\le m, \,1\le j\le3\},$ we show that the needed arrangement indeed exists. Take 10 sets of points, $B,C,D,\ldots,K,$ with cardinalities shown on Figure \ref{gigasok}, so that for any elements $(x^B,y^B)\in B, (x^C,y^C)\in C, \ldots, (x^K,y^K)\in K$ and $(x^X,y^X)\in X$ we have 

\begin{center}
    $x^I<x^K<x^J<x^G<x^E<x^D<x^H<x^F<x^C<x^X<x^B,$

    $y^X<y^K<y^J<y^I<y^G<y^H<y^E<y^B<y^D<y^C,$

    $y^H<y^F<y^B.$
\end{center}

\noindent Therefore, if we take neighboring strips on Figure \ref{gigasok}, then the union of the contained sets is captured by an axis-parallel strip.  

Moreover, we arrange the points of $B=\{b_1,b_2,\ldots,b_{m-2a}\}$ and $C=\{c_1,c_2,\ldots,c_{m-2a}\}$ on a diagonal line such that for $b_k=(x_k^B,y_k^B)$ and $c_l=(x_l^C,y_l^C)$ $(1\le k,l \le m-2a)$ we have

\begin{center}
    $x_{m-2a}^C<x_{m-2a-1}^C<\ldots<x_1^C<x_{m-2a}^B<x_{m-2a-1}^B<\ldots<x_1^B,$

    $y_1^B<y_2^B<\ldots<y_{m-2a}^B<y_1^C<y_2^C<\ldots<y_{m-2a}^C.$
\end{center}

\noindent This implies that if we take $B'=\{b_k, b_{k+1},\ldots,b_{m-2a}\}$, the last few elements of $B$, and $C'=\{c_1,c_2,\ldots,c_l\}$, the first few elements of $C,$ then both $B'\cup C'\cup D$ and $B'\cup C'\cup X$ are captured by an axis-parallel strip.


\begin{figure} [h!]
    \centering
    \includegraphics{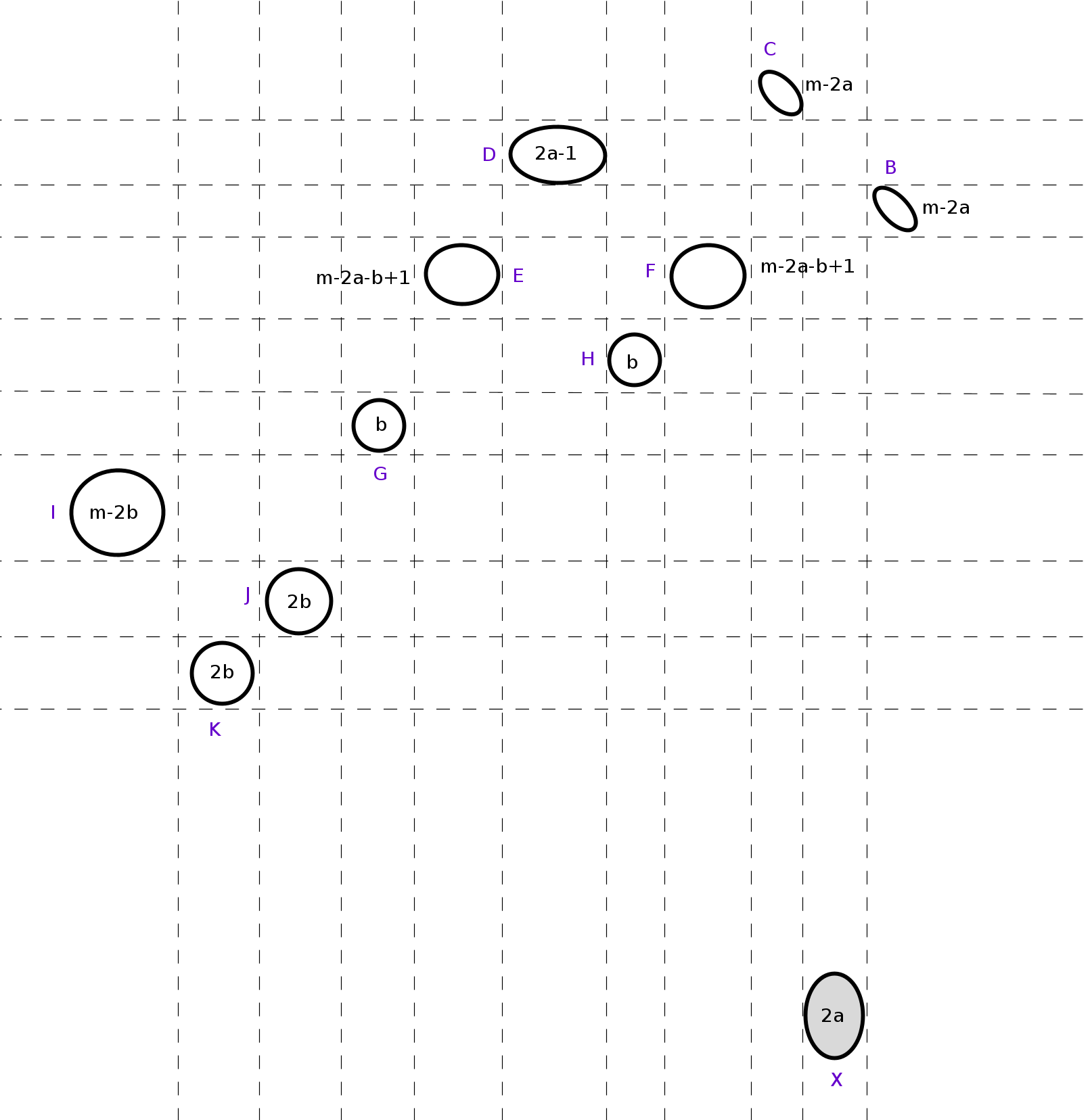}
    \caption{The vertical strip of a $2a$-set $X$}
    \label{gigasok}
\end{figure}

Now let $S$ be a 2-shallow hitting set, and suppose indirectly that $X$ does not contain an element from $S$. Then $B$ contains a point from the hitting set, because $B\cup X$ is captured by a vertical strip of size $m$. Take the largest $k$ such that $b_{k}\in S$, and take $B'=\{b_{k+1},\ldots,b_{m-2a}\}$, and $C'=\{c_1,c_2,\ldots,c_k\}$. Then $B'\cup C'\cup X$ is captured by a vertical strip, $|B'\cup C'\cup X|=m$, and $S\cap(B'\cup X)=\emptyset,$ so $S\cap C'\ne \emptyset$. Therefore, $|S\cap(\{b_k\}\cup B'\cup C')|\ge 2,$ which implies that $D\cap S=\emptyset$, because $\{b_k\}\cup B'\cup C'\cup D$ is captured by a horizontal strip.

Observe that $|D\cup E\cup G|=|D\cup E\cup H|=|D\cup F\cup H|=m$, and that these sets are all captured by an axis-parallel strip, thus $E \cup G$, $E \cup H$ and $F \cup H$ each contain a point from $S$. Notice that $a-\frac{b}{2}=m-2a-b+1=|E|=|F|$, so $|E\cup F\cup H|=2a$, which implies $\big|(E\cup F\cup H)\cap S\big|\le 1$, because otherwise $E \cup F \cup H \cup B$ would form a hyperedge of cardinality $m$ containing at least 3 points from the hitting set.


 This implies $E\cap S=\emptyset$, hence $G$ and $H$ each contain a point from S, so $I\cap S=\est$. Thus $J$ and $K$ each contain a point from $S$, so $|S\cap(G\cup J\cup K)|\ge3.$ Assuming $m\ge 5b$, $G\cup J\cup K$ can be extended to a hyperedge of cardinality $m$, which is a contradiction.
\end{proof}

\begin{proof}[Proof of Theorem~\ref{kereszt}]

For proving the upper bound, color an arbitrary point set in the plane in such a way that all axis-parallel strips of size at least $2k-1$ are polychromatic, this is possible by Theorem 1. in \cite{apstrip}. Then any edge of size at least $4k-3$ contains a horizontal or vertical axis-parallel strip of size at least $2k-1$ and thus polychromatic, implying that the coloring is polychromatic.

For the construction to prove the lower bound, take 8 sets $H_1, H_2, \ldots, H_8$, each consisting of $\lceil \frac{3}{4}k\rceil-1$ points in such an arrangement so that for $(x_i,y_i)\in H_i$ (where $1\le i\le8$) we have 

\begin{center}
    $x_1<x_2<x_3<x_4<x_5<x_6<x_7<x_8$

$y_7<y_5<y_8<y_6<y_3<y_1<y_4<y_2,$  
\end{center}

\noindent see Figure \ref{keresztabra}.

\begin{figure}[h!]
\centering
\begin{tikzpicture}[line cap=round,line join=round,>=triangle 45,x=0.8cm,y=0.8cm]
\clip(-0.5,-3.5) rectangle (9.5,4.5);
\draw(3,1) circle (0.24cm);
\fill [color=black] (3.1,1.1) circle (0.5pt);
\fill [color=black] (3,0.9) circle (0.5pt);
\fill [color=black] (2.9,1) circle (0.5pt);
\fill [color=black] (2.95,1.05) circle (0.5pt);
\draw(2,4) circle (0.24cm);
\fill [color=black] (2.1,4.1) circle (0.5pt);
\fill [color=black] (2,3.9) circle (0.5pt);
\fill [color=black] (1.9,4) circle (0.5pt);
\fill [color=black] (1.95,4.05) circle (0.5pt);
\draw(4,3) circle (0.24cm);
\fill [color=black] (4.1,3.1) circle (0.5pt);
\fill [color=black] (4,2.9) circle (0.5pt);
\fill [color=black] (3.9,3) circle (0.5pt);
\fill [color=black] (3.95,3.05) circle (0.5pt);
\draw(5,-2) circle (0.24cm);
\fill [color=black] (5.1,-2.1) circle (0.5pt);
\fill [color=black] (5,-1.9) circle (0.5pt);
\fill [color=black] (4.9,-2) circle (0.5pt);
\fill [color=black] (4.95,-2.05) circle (0.5pt);
\draw(1,2) circle (0.24cm);
\fill [color=black] (1.1,2.1) circle (0.5pt);
\fill [color=black] (1,1.9) circle (0.5pt);
\fill [color=black] (0.9,2) circle (0.5pt);
\fill [color=black] (0.95,2.05) circle (0.5pt);
\draw(6,0) circle (0.24cm);
\fill [color=black] (6.1,0.1) circle (0.5pt);
\fill [color=black] (6,-0.1) circle (0.5pt);
\fill [color=black] (5.9,0) circle (0.5pt);
\fill [color=black] (5.95,0.05) circle (0.5pt);
\draw(7,-3) circle (0.24cm);
\fill [color=black] (7.1,-3.1) circle (0.5pt);
\fill [color=black] (7,-2.9) circle (0.5pt);
\fill [color=black] (6.9,-3) circle (0.5pt);
\fill [color=black] (6.95,-3.05) circle (0.5pt);
\draw(8,-1) circle (0.24cm);
\fill [color=black] (8.1,-1.1) circle (0.5pt);
\fill [color=black] (8,-0.9) circle (0.5pt);
\fill [color=black] (7.9,-1) circle (0.5pt);
\fill [color=black] (7.95,-1.05) circle (0.5pt);

\draw (1.5,2.5) node [anchor=north west][inner sep=0.75pt]    {$H_1$};

\draw (2.5,4.5) node [anchor=north west][inner sep=0.75pt]    {$H_2$};

\draw (3.5,1.5) node [anchor=north west][inner sep=0.75pt]    {$H_3$};

\draw (4.5,3.5) node [anchor=north west][inner sep=0.75pt]    {$H_4$};

\draw (5.5,-1.5) node [anchor=north west][inner sep=0.75pt]    {$H_5$};

\draw (6.5,0.5) node [anchor=north west][inner sep=0.75pt]    {$H_6$};

\draw (7.5,-2.5) node [anchor=north west][inner sep=0.75pt]    {$H_7$};

\draw (8.5,-0.5) node [anchor=north west][inner sep=0.75pt]    {$H_8$};

\end{tikzpicture}
\caption{Construction for Theorem~\ref{kereszt}.}
\label{keresztabra}
\end{figure}
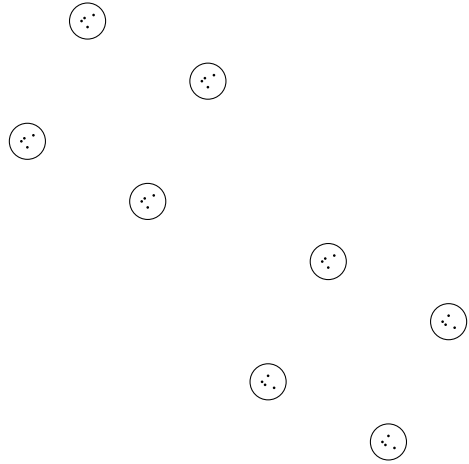

For an arbitrary coloring of the vertices each set out of the eight has strictly more than $\frac{k}{4}$ colors which does not appear in them, so by the pigeonhole principle we have 3 sets with a common missing color. Observe that any three sets can be separated from the others with the union of a horizontal and a vertical strip, which implies $m(k)>3(\lceil \frac{3}{4}k\rceil-1)$.
\end{proof}

\begin{proof}[Proof of Theorem~\ref{s_strip}]

For given $k\ge 2$ take a hypergraph from $\A$ which has a non-poly\-chromatic edge of size at least $m_\A(k)-1$ for any coloring. Take $k(s-1)+1$ copies of the point set that forms the vertices of this hypergraph, and place them along a diagonal line so that all edges of the original hypergraph would be still an edge. For any coloring of the resulting hypergraph, all copies have an edge of size $m_\A(k)-1$ that is not polychromatic, so there is a color which does not appear on the vertices of this hyperedge. By the pigeonhole principle we have $s$ of these hyperedges with the same missing color, these points are contained in the union of $s$ axis-parallel strips, so $s \cdot m_\A(k) -s + 1 \le m_{\A_s}(k)$.

For the other inequality we have to show the existence of a coloring such that any edge of size at least $s \cdot m_\A(k) -s + 1$ contains all $k$ colors. The same coloring as for $m_\A(k)$ satisfies this, because any set of $s \cdot m_\A(k) -s + 1$ points which is contained in the union of $s$ strips has a subset of $m_\A(k)$ points contained in a single strip.
\end{proof}

\begin{proof}[Proof of Theorem~\ref{vegtelen_sorozat}. (1):]
    We will show that for every $A \in \mathcal{A}_{D,\N}^{\infty}$ there exists $T_A \in \mathcal{T}_3$ and $\phi :V(A)\to V(T_A)$ injection such that for every hyperedge $E \subseteq V(A)$ the set $\phi [E]\subseteq V(T_A)$ is also a hyperedge of $T_A$. Therefore, if we can $k$-color $\left(T_A\right)_m$ properly for $m:=m_{\mathcal{T}_3}(k)$, we obtain a polychromatic $k$-coloring of $A_{m}$.
    
    The vertex set of $A \in \mathcal{A}_{D,\N}^{\infty}$ contains finitely many natural numbers. We will assign points in $\mathbb{R}^3$ to them as the vertices of $T_A$ such that for every hyperedge $E \subseteq V(A)$ the set $\phi [E]\subseteq V(T_A)$ can be defined as the intersection of an octant and $V(T_A)$. Fix the difference set D. For the sake of simplicity, we will assign a point in $\mathbb{R}^3$ to every natural number, such that every subset of $\mathbb{N}$ which could be a hyperedge in a hypergraph $A \in \mathcal{A}_{D,\N}^{\infty}$ can be defined by an octant in $\mathbb{R}^3$. 
    
    Firstly, let us discuss the case when $D = \{t^i: i \in \mathbb{N}\}$ for some $t \in \mathbb{N}^+$. We will place axis-parallel squares recursively into each other on the $y$-$z$ plane, each of them corresponding to a natural number. This natural number is placed in the bottom left corner of the square, see Figure \ref{negyzetek}.
    
    \begin{figure} [h!]
    \centering
    \includegraphics[width=1.0\textwidth]{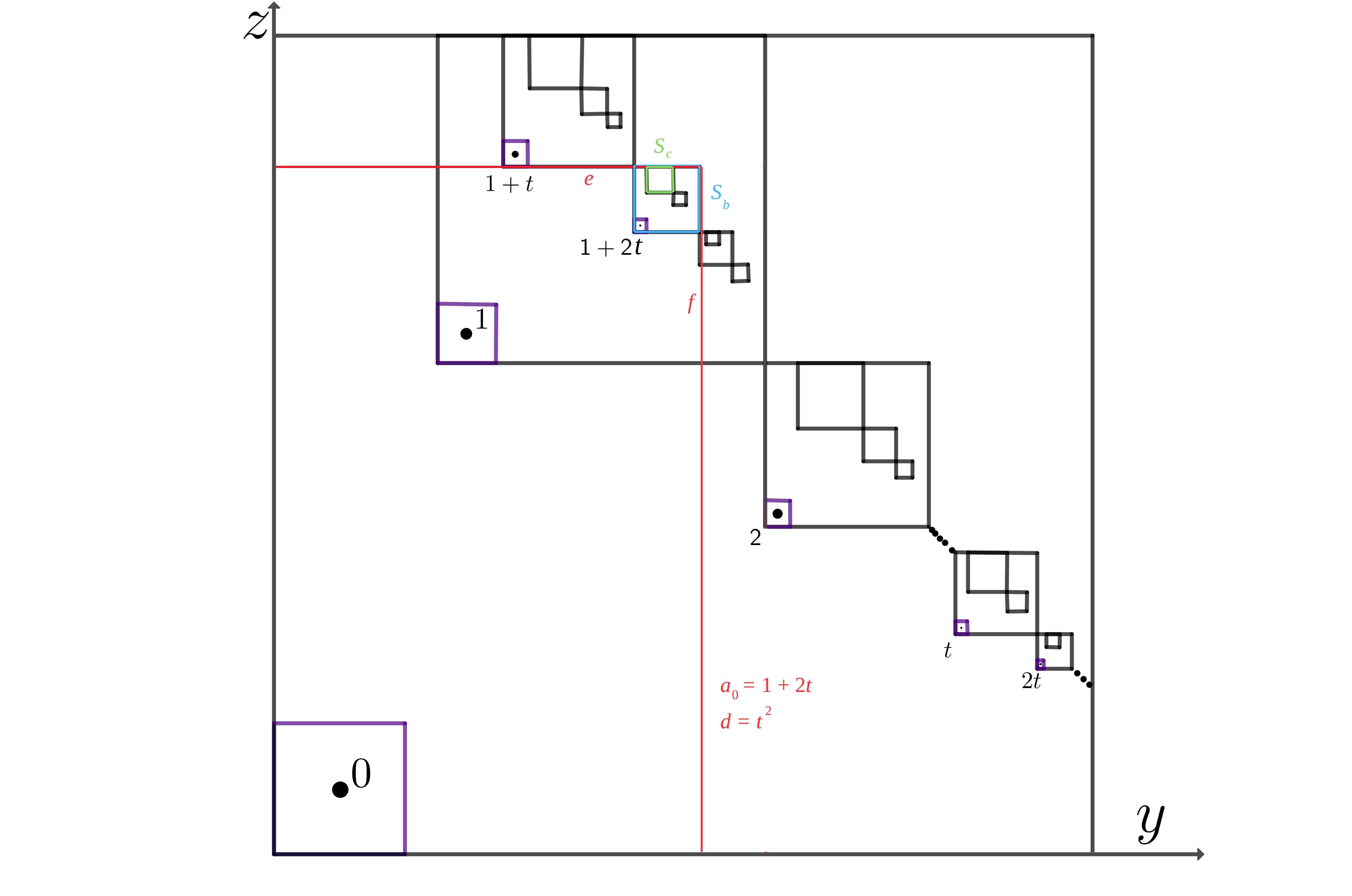}
    \caption{Placement of the squares perpendicular to the $x$-axis}
    \label{negyzetek}
\end{figure}
    
    The $0$th generation of the recursion is a square $S_0$ in the $y$-$z$ plane, corresponding to $0$, say the unit square $[0,1]\times[0,1]$. Place a point $P_0$ in the southwest corner of it. For the first generation, take those natural numbers whose $t$-base form contains only one non-zero bit. Accordingly, the first generation consists of squares $S_1, S_2,...,S_{t-1}, S_{10}, S_{20}, ..., S_{[t-1]0}, S_{100}, S_{200}, ...$ (with the indices being in base $t$) corresponding to $1, 2, ..., t-1, t, 2t, ..., (t-1)t, t^2, 2t^2, ...$, respectively. We place them diagonally (from northwest to southeast) into $S_0$, according to Figure \ref{negyzetek}, and we place a point $P_a$ into the southwest corner of every $S_a$. In terms of coordinates, the $j^\text{th}$ square of the first generation is $\left[1-\frac{1}{j+1}, 1-\frac{1}{j+2}\right]\times\left[\frac{1}{2}+\frac{1}{j+2},\frac{1}{2}+\frac{1}{j+1}\right]$.
    
    The second generation of squares correspond to natural numbers whose $t$-base form contains two nonzero bits. The value and place of the smallest nonzero bit defines that first-generation square in which we put our second-generation square: in $S_a$ where the value of $a$ is $k_1 \cdot t^{i_1}$, we place the squares corresponding to the values $k_1 \cdot t^{i_1} + k_2 \cdot t^{i_2}$ (where $i_1<i_2$), in a diagonal, ordered by the absolute value of $k_2\cdot t^{i_2}$. Take the similarity transformation that maps $S_0$ onto $S_a$ and preserves all directions. The squares of the second generation are placed as the images of the first-generation squares, in the prescribed order. And so on, the $k$-th generation consists of squares corresponding to numbers whose $t$-base form contains exactly $k$ nonzero bits. Note that a square $S_b$ contains another square $S_c$ exactly when the $t$-base form of $c$ is a finat segment of the $t$-base form of $b$. 
    
    As in the 0th and first generation, into every square $S_a$ placed in the $k$-th generation we put a point $P_a$ to the southwest corner. Now we have points in the $y$-$z$ plane assigned to every $n\in\mathbb{N}$, if $n$ has $k$ pieces of nonzero bits in its $t$-base form, we have placed a square for it in the $k$-th generation. Finally, let $\phi (n)=(n,P_n)\in\R^3.$
    
    We have to prove that for every set of form $E = \{a_0+ i \cdot t^j: i \in \mathbb{N}\}$, $a_0, j \in \mathbb{N}$ there exists an octant which contains exactly these points. Firstly, fix $a_0$ and $j$ so that $a_0<t^j$. If we project back our points to the $y$-$z$ plane, we can almost "cut out" the sequence with an axis-parallel quarter-plane; see Figure \ref{negyzetek}. Take the square $S_b$ corresponding to $a_0$. According to the construction, the square $S_c$ corresponding to $a_0 + t^j$ is contained in $S_b$. Let $e$ be the east bordering line of $S_b$ and $f$ be the north bordering line of $S_c$. The lines $e$ and $f$ define four quarter-planes, and let $Q$ be the southwest one with respect to the orientation given by $y$ and $z$. $Q$ contains points corresponding to numbers that are at least $a_0$ and are divisible by $t^j$, or that are smaller than $a_0$. Notice that every number in form of $n=a_0+k \cdot t^j$, $k\in \mathbb{N}$ is contained in $Q$ since only $S_b$ contains numbers congruent to $a_0$ modulo $t^j$, and points above $f$ in $S_b$ correspond to numbers which are the sum of $a_0$ and $r_j$ where $r_j$ has a $t$-base form with $0$ in its $j^{th}$ bit. 
    
    For the case $a_0 \geq t^j$, we can define some $a_{-1}, a_{-2},...,a_{-k}$ such that $0\leq a_{-k} < t^j$, and $\{a_i\}_{i \in \{-k, ..., -1\} \cup \mathbb{N}}$ is still an arithmetic progression, and we can build the same construction for this extended arithmetic progression as in the case $a_0<t^j$. Take the axis-parallel lines $e$ and $f$ as in the previous case, and assume that they are defined by the equations $y=y_0$ and $z=z_0$, respectively. Then the octant in $\mathbb{R}^3$ corresponding to $E$ is the set $\{(x,y,z): x \geq a_0, y \leq y_0, z \leq z_0\}$. 
    
    The general case when $D = \{d_i: i\in\N, d_i \in \mathbb{N}^+, d_{i-1} | d_{i}\}$, $d_0:=1$ is very similar. We use almost the same construction, we just replace the $t$-base form of natural numbers with the following: Assume that for every $k<n$ we have defined a form $k = \sum\limits_{j=0}^{i-1} c_j \cdot d_j$. For $n \in \mathbb{N}$, let $i$ be the maximal index for which $d_i \leq n$. Let $c_i$ be the maximal natural number for which $c_i \cdot d_i \leq n$. Notice that $c_i<\frac{d_{i+1}}{d_i}$. If $k := n-c_i \cdot d_i = \sum\limits_{j=0}^{i-1} c_j \cdot d_j$, write $n$ in form $n = \sum\limits_{j=0}^{i} c_j \cdot d_j$. We build up the construction in the same way, we just replace the $t$-base form of numbers with the sequence $...c_2c_1c_0$.
    \end{proof}

    \begin{proof}[Proof of Theorem~\ref{vegtelen_sorozat}. (2)]
        Firstly, we show that
$m_{\{p^iq^j:i,j \in \mathbb{N}\},\{0\}}^{\infty}(k) \le m_{\mathcal{T}_3}(k)$. To prove this, we construct an injection $\phi:\mathbb{N}\to \mathbb{R}^3$ such that for every fixed $i,j$ the set $\phi[\{a_0+k \cdot p^iq^j: k \in \mathbb{N}, \text{ } p^iq^j\mid a_0\}]$ can be defined by an octant. If $n = k \cdot p^i q^j$ where $p,q$ are not divisors of $k$, let $g_1(n) := i, g_2(n):=j$. Since $p$ and $q$ are relative primes, the form above is unique for every $n$, thus $g_1(n)$ and $g_2(n)$ are well-defined. For $n>0$ define $\phi(n) = (n, 1- \frac{1}{g_1(n)+1}, 1- \frac{1}{g_2(n)+1})$, and let $\phi(0)=(0,1,1)$. The set $\phi[\{a_0+k \cdot p^iq^j: k \in \mathbb{N}, \text{ } p^iq^j\mid a_0\}]$ is the intersection of {\rm Im}$(\phi)$ and the octant  $\{(x,y,z): x \geq a_0, y \geq 1-\frac{1}{i+1}, z \geq 1-\frac{1}{j+1} \}$. Notice that here we use the fact that all considered arithmetic progressions are the final segment of another progression beginning at 0.

Now we turn to the general case. Notice that in the previous construction {\rm Im}$(\phi)$ projected to the $y$-$z$ plane is contained in a square $S$ which has side length $1$. In this case, we build up $pq$ squares on the $y$-$z$ plane, diagonally. The square $S_r$ ($0\leq r < pq$) will contain numbers congruent to $r$ modulo $pq$.  

\begin{figure}[h!]
\begin{center}
    
\tikzset{every picture/.style={line width=0.75pt}} 

\begin{tikzpicture}[x=0.75pt,y=0.75pt,yscale=-1,xscale=1]

\draw    (100.5,290) -- (100.5,12) ;
\draw [shift={(100.5,10)}, rotate = 90] [color={rgb, 255:red, 0; green, 0; blue, 0 }  ][line width=0.75]    (10.93,-3.29) .. controls (6.95,-1.4) and (3.31,-0.3) .. (0,0) .. controls (3.31,0.3) and (6.95,1.4) .. (10.93,3.29)   ;
\draw    (89.5,70) -- (377.5,70) ;
\draw [shift={(379.5,70)}, rotate = 180] [color={rgb, 255:red, 0; green, 0; blue, 0 }  ][line width=0.75]    (10.93,-3.29) .. controls (6.95,-1.4) and (3.31,-0.3) .. (0,0) .. controls (3.31,0.3) and (6.95,1.4) .. (10.93,3.29)   ;
\draw   (180.5,110) -- (220.5,110) -- (220.5,150) -- (180.5,150) -- cycle ;
\draw   (140.5,70) -- (180.5,70) -- (180.5,110) -- (140.5,110) -- cycle ;
\draw   (100.5,30) -- (140.5,30) -- (140.5,70) -- (100.5,70) -- cycle ;
\draw   (300.5,230) -- (340.5,230) -- (340.5,270) -- (300.5,270) -- cycle ;
\draw  [fill={rgb, 255:red, 0; green, 0; blue, 0 }  ,fill opacity=1 ] (120.5,50) .. controls (120.5,49.72) and (120.28,49.5) .. (120,49.5) .. controls (119.72,49.5) and (119.5,49.72) .. (119.5,50) .. controls (119.5,50.28) and (119.72,50.5) .. (120,50.5) .. controls (120.28,50.5) and (120.5,50.28) .. (120.5,50) -- cycle ;
\draw  [fill={rgb, 255:red, 0; green, 0; blue, 0 }  ,fill opacity=1 ] (120.55,39.82) .. controls (120.55,39.54) and (120.32,39.32) .. (120.05,39.32) .. controls (119.77,39.32) and (119.55,39.54) .. (119.55,39.82) .. controls (119.55,40.09) and (119.77,40.32) .. (120.05,40.32) .. controls (120.32,40.32) and (120.55,40.09) .. (120.55,39.82) -- cycle ;
\draw  [fill={rgb, 255:red, 0; green, 0; blue, 0 }  ,fill opacity=1 ] (120.55,43.64) .. controls (120.55,43.36) and (120.32,43.14) .. (120.05,43.14) .. controls (119.77,43.14) and (119.55,43.36) .. (119.55,43.64) .. controls (119.55,43.91) and (119.77,44.14) .. (120.05,44.14) .. controls (120.32,44.14) and (120.55,43.91) .. (120.55,43.64) -- cycle ;
\draw  [fill={rgb, 255:red, 0; green, 0; blue, 0 }  ,fill opacity=1 ] (120.55,36.91) .. controls (120.55,36.63) and (120.32,36.41) .. (120.05,36.41) .. controls (119.77,36.41) and (119.55,36.63) .. (119.55,36.91) .. controls (119.55,37.19) and (119.77,37.41) .. (120.05,37.41) .. controls (120.32,37.41) and (120.55,37.19) .. (120.55,36.91) -- cycle ;
\draw  [fill={rgb, 255:red, 0; green, 0; blue, 0 }  ,fill opacity=1 ] (127.09,50) .. controls (127.09,49.72) and (126.87,49.5) .. (126.59,49.5) .. controls (126.31,49.5) and (126.09,49.72) .. (126.09,50) .. controls (126.09,50.28) and (126.31,50.5) .. (126.59,50.5) .. controls (126.87,50.5) and (127.09,50.28) .. (127.09,50) -- cycle ;
\draw  [fill={rgb, 255:red, 0; green, 0; blue, 0 }  ,fill opacity=1 ] (127.09,43.64) .. controls (127.09,43.36) and (126.87,43.14) .. (126.59,43.14) .. controls (126.31,43.14) and (126.09,43.36) .. (126.09,43.64) .. controls (126.09,43.91) and (126.31,44.14) .. (126.59,44.14) .. controls (126.87,44.14) and (127.09,43.91) .. (127.09,43.64) -- cycle ;
\draw  [fill={rgb, 255:red, 0; green, 0; blue, 0 }  ,fill opacity=1 ] (127.09,40) .. controls (127.09,39.72) and (126.87,39.5) .. (126.59,39.5) .. controls (126.31,39.5) and (126.09,39.72) .. (126.09,40) .. controls (126.09,40.28) and (126.31,40.5) .. (126.59,40.5) .. controls (126.87,40.5) and (127.09,40.28) .. (127.09,40) -- cycle ;
\draw  [fill={rgb, 255:red, 0; green, 0; blue, 0 }  ,fill opacity=1 ] (127.05,36.87) .. controls (127.05,36.59) and (126.83,36.37) .. (126.55,36.37) .. controls (126.27,36.37) and (126.05,36.59) .. (126.05,36.87) .. controls (126.05,37.14) and (126.27,37.37) .. (126.55,37.37) .. controls (126.83,37.37) and (127.05,37.14) .. (127.05,36.87) -- cycle ;
\draw  [fill={rgb, 255:red, 0; green, 0; blue, 0 }  ,fill opacity=1 ] (130.87,50) .. controls (130.87,49.72) and (130.64,49.5) .. (130.37,49.5) .. controls (130.09,49.5) and (129.87,49.72) .. (129.87,50) .. controls (129.87,50.28) and (130.09,50.5) .. (130.37,50.5) .. controls (130.64,50.5) and (130.87,50.28) .. (130.87,50) -- cycle ;
\draw  [fill={rgb, 255:red, 0; green, 0; blue, 0 }  ,fill opacity=1 ] (130.87,43.82) .. controls (130.87,43.54) and (130.64,43.32) .. (130.37,43.32) .. controls (130.09,43.32) and (129.87,43.54) .. (129.87,43.82) .. controls (129.87,44.09) and (130.09,44.32) .. (130.37,44.32) .. controls (130.64,44.32) and (130.87,44.09) .. (130.87,43.82) -- cycle ;
\draw  [fill={rgb, 255:red, 0; green, 0; blue, 0 }  ,fill opacity=1 ] (130.87,40.26) .. controls (130.87,39.99) and (130.64,39.76) .. (130.37,39.76) .. controls (130.09,39.76) and (129.87,39.99) .. (129.87,40.26) .. controls (129.87,40.54) and (130.09,40.76) .. (130.37,40.76) .. controls (130.64,40.76) and (130.87,40.54) .. (130.87,40.26) -- cycle ;
\draw  [fill={rgb, 255:red, 0; green, 0; blue, 0 }  ,fill opacity=1 ] (130.87,36.95) .. controls (130.87,36.67) and (130.64,36.45) .. (130.37,36.45) .. controls (130.09,36.45) and (129.87,36.67) .. (129.87,36.95) .. controls (129.87,37.23) and (130.09,37.45) .. (130.37,37.45) .. controls (130.64,37.45) and (130.87,37.23) .. (130.87,36.95) -- cycle ;
\draw  [fill={rgb, 255:red, 0; green, 0; blue, 0 }  ,fill opacity=1 ] (134,50.04) .. controls (134,49.76) and (133.78,49.54) .. (133.5,49.54) .. controls (133.22,49.54) and (133,49.76) .. (133,50.04) .. controls (133,50.32) and (133.22,50.54) .. (133.5,50.54) .. controls (133.78,50.54) and (134,50.32) .. (134,50.04) -- cycle ;
\draw  [fill={rgb, 255:red, 0; green, 0; blue, 0 }  ,fill opacity=1 ] (134.04,43.86) .. controls (134.04,43.58) and (133.82,43.36) .. (133.54,43.36) .. controls (133.26,43.36) and (133.04,43.58) .. (133.04,43.86) .. controls (133.04,44.13) and (133.26,44.36) .. (133.54,44.36) .. controls (133.82,44.36) and (134.04,44.13) .. (134.04,43.86) -- cycle ;
\draw  [fill={rgb, 255:red, 0; green, 0; blue, 0 }  ,fill opacity=1 ] (134.22,40.08) .. controls (134.22,39.8) and (134,39.58) .. (133.72,39.58) .. controls (133.45,39.58) and (133.22,39.8) .. (133.22,40.08) .. controls (133.22,40.36) and (133.45,40.58) .. (133.72,40.58) .. controls (134,40.58) and (134.22,40.36) .. (134.22,40.08) -- cycle ;
\draw  [fill={rgb, 255:red, 0; green, 0; blue, 0 }  ,fill opacity=1 ] (134.22,36.95) .. controls (134.22,36.67) and (134,36.45) .. (133.72,36.45) .. controls (133.45,36.45) and (133.22,36.67) .. (133.22,36.95) .. controls (133.22,37.23) and (133.45,37.45) .. (133.72,37.45) .. controls (134,37.45) and (134.22,37.23) .. (134.22,36.95) -- cycle ;
\draw  [fill={rgb, 255:red, 0; green, 0; blue, 0 }  ,fill opacity=1 ] (141,30) .. controls (141,29.72) and (140.78,29.5) .. (140.5,29.5) .. controls (140.22,29.5) and (140,29.72) .. (140,30) .. controls (140,30.28) and (140.22,30.5) .. (140.5,30.5) .. controls (140.78,30.5) and (141,30.28) .. (141,30) -- cycle ;
\draw  [fill={rgb, 255:red, 0; green, 0; blue, 0 }  ,fill opacity=1 ] (180.5,70.25) .. controls (180.5,70.11) and (180.39,70) .. (180.25,70) .. controls (180.11,70) and (180,70.11) .. (180,70.25) .. controls (180,70.39) and (180.11,70.5) .. (180.25,70.5) .. controls (180.39,70.5) and (180.5,70.39) .. (180.5,70.25) -- cycle ;
\draw  [fill={rgb, 255:red, 0; green, 0; blue, 0 }  ,fill opacity=1 ] (160.5,90) .. controls (160.5,89.72) and (160.28,89.5) .. (160,89.5) .. controls (159.72,89.5) and (159.5,89.72) .. (159.5,90) .. controls (159.5,90.28) and (159.72,90.5) .. (160,90.5) .. controls (160.28,90.5) and (160.5,90.28) .. (160.5,90) -- cycle ;
\draw  [fill={rgb, 255:red, 0; green, 0; blue, 0 }  ,fill opacity=1 ] (160.55,79.82) .. controls (160.55,79.54) and (160.32,79.32) .. (160.05,79.32) .. controls (159.77,79.32) and (159.55,79.54) .. (159.55,79.82) .. controls (159.55,80.09) and (159.77,80.32) .. (160.05,80.32) .. controls (160.32,80.32) and (160.55,80.09) .. (160.55,79.82) -- cycle ;
\draw  [fill={rgb, 255:red, 0; green, 0; blue, 0 }  ,fill opacity=1 ] (160.55,83.64) .. controls (160.55,83.36) and (160.32,83.14) .. (160.05,83.14) .. controls (159.77,83.14) and (159.55,83.36) .. (159.55,83.64) .. controls (159.55,83.91) and (159.77,84.14) .. (160.05,84.14) .. controls (160.32,84.14) and (160.55,83.91) .. (160.55,83.64) -- cycle ;
\draw  [fill={rgb, 255:red, 0; green, 0; blue, 0 }  ,fill opacity=1 ] (160.55,76.91) .. controls (160.55,76.63) and (160.32,76.41) .. (160.05,76.41) .. controls (159.77,76.41) and (159.55,76.63) .. (159.55,76.91) .. controls (159.55,77.19) and (159.77,77.41) .. (160.05,77.41) .. controls (160.32,77.41) and (160.55,77.19) .. (160.55,76.91) -- cycle ;
\draw  [fill={rgb, 255:red, 0; green, 0; blue, 0 }  ,fill opacity=1 ] (167.09,90) .. controls (167.09,89.72) and (166.87,89.5) .. (166.59,89.5) .. controls (166.31,89.5) and (166.09,89.72) .. (166.09,90) .. controls (166.09,90.28) and (166.31,90.5) .. (166.59,90.5) .. controls (166.87,90.5) and (167.09,90.28) .. (167.09,90) -- cycle ;
\draw  [fill={rgb, 255:red, 0; green, 0; blue, 0 }  ,fill opacity=1 ] (167.09,83.64) .. controls (167.09,83.36) and (166.87,83.14) .. (166.59,83.14) .. controls (166.31,83.14) and (166.09,83.36) .. (166.09,83.64) .. controls (166.09,83.91) and (166.31,84.14) .. (166.59,84.14) .. controls (166.87,84.14) and (167.09,83.91) .. (167.09,83.64) -- cycle ;
\draw  [fill={rgb, 255:red, 0; green, 0; blue, 0 }  ,fill opacity=1 ] (167.09,80) .. controls (167.09,79.72) and (166.87,79.5) .. (166.59,79.5) .. controls (166.31,79.5) and (166.09,79.72) .. (166.09,80) .. controls (166.09,80.28) and (166.31,80.5) .. (166.59,80.5) .. controls (166.87,80.5) and (167.09,80.28) .. (167.09,80) -- cycle ;
\draw  [fill={rgb, 255:red, 0; green, 0; blue, 0 }  ,fill opacity=1 ] (167.05,76.87) .. controls (167.05,76.59) and (166.83,76.37) .. (166.55,76.37) .. controls (166.27,76.37) and (166.05,76.59) .. (166.05,76.87) .. controls (166.05,77.14) and (166.27,77.37) .. (166.55,77.37) .. controls (166.83,77.37) and (167.05,77.14) .. (167.05,76.87) -- cycle ;
\draw  [fill={rgb, 255:red, 0; green, 0; blue, 0 }  ,fill opacity=1 ] (170.87,90) .. controls (170.87,89.72) and (170.64,89.5) .. (170.37,89.5) .. controls (170.09,89.5) and (169.87,89.72) .. (169.87,90) .. controls (169.87,90.28) and (170.09,90.5) .. (170.37,90.5) .. controls (170.64,90.5) and (170.87,90.28) .. (170.87,90) -- cycle ;
\draw  [fill={rgb, 255:red, 0; green, 0; blue, 0 }  ,fill opacity=1 ] (170.87,83.82) .. controls (170.87,83.54) and (170.64,83.32) .. (170.37,83.32) .. controls (170.09,83.32) and (169.87,83.54) .. (169.87,83.82) .. controls (169.87,84.09) and (170.09,84.32) .. (170.37,84.32) .. controls (170.64,84.32) and (170.87,84.09) .. (170.87,83.82) -- cycle ;
\draw  [fill={rgb, 255:red, 0; green, 0; blue, 0 }  ,fill opacity=1 ] (170.87,80.26) .. controls (170.87,79.99) and (170.64,79.76) .. (170.37,79.76) .. controls (170.09,79.76) and (169.87,79.99) .. (169.87,80.26) .. controls (169.87,80.54) and (170.09,80.76) .. (170.37,80.76) .. controls (170.64,80.76) and (170.87,80.54) .. (170.87,80.26) -- cycle ;
\draw  [fill={rgb, 255:red, 0; green, 0; blue, 0 }  ,fill opacity=1 ] (170.87,76.95) .. controls (170.87,76.67) and (170.64,76.45) .. (170.37,76.45) .. controls (170.09,76.45) and (169.87,76.67) .. (169.87,76.95) .. controls (169.87,77.23) and (170.09,77.45) .. (170.37,77.45) .. controls (170.64,77.45) and (170.87,77.23) .. (170.87,76.95) -- cycle ;
\draw  [fill={rgb, 255:red, 0; green, 0; blue, 0 }  ,fill opacity=1 ] (174,90.04) .. controls (174,89.76) and (173.78,89.54) .. (173.5,89.54) .. controls (173.22,89.54) and (173,89.76) .. (173,90.04) .. controls (173,90.32) and (173.22,90.54) .. (173.5,90.54) .. controls (173.78,90.54) and (174,90.32) .. (174,90.04) -- cycle ;
\draw  [fill={rgb, 255:red, 0; green, 0; blue, 0 }  ,fill opacity=1 ] (174.04,83.86) .. controls (174.04,83.58) and (173.82,83.36) .. (173.54,83.36) .. controls (173.26,83.36) and (173.04,83.58) .. (173.04,83.86) .. controls (173.04,84.13) and (173.26,84.36) .. (173.54,84.36) .. controls (173.82,84.36) and (174.04,84.13) .. (174.04,83.86) -- cycle ;
\draw  [fill={rgb, 255:red, 0; green, 0; blue, 0 }  ,fill opacity=1 ] (174.22,80.08) .. controls (174.22,79.8) and (174,79.58) .. (173.72,79.58) .. controls (173.45,79.58) and (173.22,79.8) .. (173.22,80.08) .. controls (173.22,80.36) and (173.45,80.58) .. (173.72,80.58) .. controls (174,80.58) and (174.22,80.36) .. (174.22,80.08) -- cycle ;
\draw  [fill={rgb, 255:red, 0; green, 0; blue, 0 }  ,fill opacity=1 ] (174.22,76.95) .. controls (174.22,76.67) and (174,76.45) .. (173.72,76.45) .. controls (173.45,76.45) and (173.22,76.67) .. (173.22,76.95) .. controls (173.22,77.23) and (173.45,77.45) .. (173.72,77.45) .. controls (174,77.45) and (174.22,77.23) .. (174.22,76.95) -- cycle ;
\draw  [fill={rgb, 255:red, 0; green, 0; blue, 0 }  ,fill opacity=1 ] (200.5,130) .. controls (200.5,129.72) and (200.28,129.5) .. (200,129.5) .. controls (199.72,129.5) and (199.5,129.72) .. (199.5,130) .. controls (199.5,130.28) and (199.72,130.5) .. (200,130.5) .. controls (200.28,130.5) and (200.5,130.28) .. (200.5,130) -- cycle ;
\draw  [fill={rgb, 255:red, 0; green, 0; blue, 0 }  ,fill opacity=1 ] (200.55,119.82) .. controls (200.55,119.54) and (200.32,119.32) .. (200.05,119.32) .. controls (199.77,119.32) and (199.55,119.54) .. (199.55,119.82) .. controls (199.55,120.09) and (199.77,120.32) .. (200.05,120.32) .. controls (200.32,120.32) and (200.55,120.09) .. (200.55,119.82) -- cycle ;
\draw  [fill={rgb, 255:red, 0; green, 0; blue, 0 }  ,fill opacity=1 ] (200.55,123.64) .. controls (200.55,123.36) and (200.32,123.14) .. (200.05,123.14) .. controls (199.77,123.14) and (199.55,123.36) .. (199.55,123.64) .. controls (199.55,123.91) and (199.77,124.14) .. (200.05,124.14) .. controls (200.32,124.14) and (200.55,123.91) .. (200.55,123.64) -- cycle ;
\draw  [fill={rgb, 255:red, 0; green, 0; blue, 0 }  ,fill opacity=1 ] (200.55,116.91) .. controls (200.55,116.63) and (200.32,116.41) .. (200.05,116.41) .. controls (199.77,116.41) and (199.55,116.63) .. (199.55,116.91) .. controls (199.55,117.19) and (199.77,117.41) .. (200.05,117.41) .. controls (200.32,117.41) and (200.55,117.19) .. (200.55,116.91) -- cycle ;
\draw  [fill={rgb, 255:red, 0; green, 0; blue, 0 }  ,fill opacity=1 ] (207.09,130) .. controls (207.09,129.72) and (206.87,129.5) .. (206.59,129.5) .. controls (206.31,129.5) and (206.09,129.72) .. (206.09,130) .. controls (206.09,130.28) and (206.31,130.5) .. (206.59,130.5) .. controls (206.87,130.5) and (207.09,130.28) .. (207.09,130) -- cycle ;
\draw  [fill={rgb, 255:red, 0; green, 0; blue, 0 }  ,fill opacity=1 ] (207.09,123.64) .. controls (207.09,123.36) and (206.87,123.14) .. (206.59,123.14) .. controls (206.31,123.14) and (206.09,123.36) .. (206.09,123.64) .. controls (206.09,123.91) and (206.31,124.14) .. (206.59,124.14) .. controls (206.87,124.14) and (207.09,123.91) .. (207.09,123.64) -- cycle ;
\draw  [fill={rgb, 255:red, 0; green, 0; blue, 0 }  ,fill opacity=1 ] (207.09,120) .. controls (207.09,119.72) and (206.87,119.5) .. (206.59,119.5) .. controls (206.31,119.5) and (206.09,119.72) .. (206.09,120) .. controls (206.09,120.28) and (206.31,120.5) .. (206.59,120.5) .. controls (206.87,120.5) and (207.09,120.28) .. (207.09,120) -- cycle ;
\draw  [fill={rgb, 255:red, 0; green, 0; blue, 0 }  ,fill opacity=1 ] (207.05,116.87) .. controls (207.05,116.59) and (206.83,116.37) .. (206.55,116.37) .. controls (206.27,116.37) and (206.05,116.59) .. (206.05,116.87) .. controls (206.05,117.14) and (206.27,117.37) .. (206.55,117.37) .. controls (206.83,117.37) and (207.05,117.14) .. (207.05,116.87) -- cycle ;
\draw  [fill={rgb, 255:red, 0; green, 0; blue, 0 }  ,fill opacity=1 ] (210.87,130) .. controls (210.87,129.72) and (210.64,129.5) .. (210.37,129.5) .. controls (210.09,129.5) and (209.87,129.72) .. (209.87,130) .. controls (209.87,130.28) and (210.09,130.5) .. (210.37,130.5) .. controls (210.64,130.5) and (210.87,130.28) .. (210.87,130) -- cycle ;
\draw  [fill={rgb, 255:red, 0; green, 0; blue, 0 }  ,fill opacity=1 ] (210.87,123.82) .. controls (210.87,123.54) and (210.64,123.32) .. (210.37,123.32) .. controls (210.09,123.32) and (209.87,123.54) .. (209.87,123.82) .. controls (209.87,124.09) and (210.09,124.32) .. (210.37,124.32) .. controls (210.64,124.32) and (210.87,124.09) .. (210.87,123.82) -- cycle ;
\draw  [fill={rgb, 255:red, 0; green, 0; blue, 0 }  ,fill opacity=1 ] (210.87,120.26) .. controls (210.87,119.99) and (210.64,119.76) .. (210.37,119.76) .. controls (210.09,119.76) and (209.87,119.99) .. (209.87,120.26) .. controls (209.87,120.54) and (210.09,120.76) .. (210.37,120.76) .. controls (210.64,120.76) and (210.87,120.54) .. (210.87,120.26) -- cycle ;
\draw  [fill={rgb, 255:red, 0; green, 0; blue, 0 }  ,fill opacity=1 ] (210.87,116.95) .. controls (210.87,116.67) and (210.64,116.45) .. (210.37,116.45) .. controls (210.09,116.45) and (209.87,116.67) .. (209.87,116.95) .. controls (209.87,117.23) and (210.09,117.45) .. (210.37,117.45) .. controls (210.64,117.45) and (210.87,117.23) .. (210.87,116.95) -- cycle ;
\draw  [fill={rgb, 255:red, 0; green, 0; blue, 0 }  ,fill opacity=1 ] (214,130.04) .. controls (214,129.76) and (213.78,129.54) .. (213.5,129.54) .. controls (213.22,129.54) and (213,129.76) .. (213,130.04) .. controls (213,130.32) and (213.22,130.54) .. (213.5,130.54) .. controls (213.78,130.54) and (214,130.32) .. (214,130.04) -- cycle ;
\draw  [fill={rgb, 255:red, 0; green, 0; blue, 0 }  ,fill opacity=1 ] (214.04,123.86) .. controls (214.04,123.58) and (213.82,123.36) .. (213.54,123.36) .. controls (213.26,123.36) and (213.04,123.58) .. (213.04,123.86) .. controls (213.04,124.13) and (213.26,124.36) .. (213.54,124.36) .. controls (213.82,124.36) and (214.04,124.13) .. (214.04,123.86) -- cycle ;
\draw  [fill={rgb, 255:red, 0; green, 0; blue, 0 }  ,fill opacity=1 ] (214.22,120.08) .. controls (214.22,119.8) and (214,119.58) .. (213.72,119.58) .. controls (213.45,119.58) and (213.22,119.8) .. (213.22,120.08) .. controls (213.22,120.36) and (213.45,120.58) .. (213.72,120.58) .. controls (214,120.58) and (214.22,120.36) .. (214.22,120.08) -- cycle ;
\draw  [fill={rgb, 255:red, 0; green, 0; blue, 0 }  ,fill opacity=1 ] (214.22,116.95) .. controls (214.22,116.67) and (214,116.45) .. (213.72,116.45) .. controls (213.45,116.45) and (213.22,116.67) .. (213.22,116.95) .. controls (213.22,117.23) and (213.45,117.45) .. (213.72,117.45) .. controls (214,117.45) and (214.22,117.23) .. (214.22,116.95) -- cycle ;
\draw  [fill={rgb, 255:red, 0; green, 0; blue, 0 }  ,fill opacity=1 ] (320.56,249.96) .. controls (320.56,249.69) and (320.33,249.46) .. (320.06,249.46) .. controls (319.78,249.46) and (319.56,249.69) .. (319.56,249.96) .. controls (319.56,250.24) and (319.78,250.46) .. (320.06,250.46) .. controls (320.33,250.46) and (320.56,250.24) .. (320.56,249.96) -- cycle ;
\draw  [fill={rgb, 255:red, 0; green, 0; blue, 0 }  ,fill opacity=1 ] (320.6,239.78) .. controls (320.6,239.51) and (320.38,239.28) .. (320.1,239.28) .. controls (319.82,239.28) and (319.6,239.51) .. (319.6,239.78) .. controls (319.6,240.06) and (319.82,240.28) .. (320.1,240.28) .. controls (320.38,240.28) and (320.6,240.06) .. (320.6,239.78) -- cycle ;
\draw  [fill={rgb, 255:red, 0; green, 0; blue, 0 }  ,fill opacity=1 ] (320.6,243.6) .. controls (320.6,243.32) and (320.38,243.1) .. (320.1,243.1) .. controls (319.82,243.1) and (319.6,243.32) .. (319.6,243.6) .. controls (319.6,243.88) and (319.82,244.1) .. (320.1,244.1) .. controls (320.38,244.1) and (320.6,243.88) .. (320.6,243.6) -- cycle ;
\draw  [fill={rgb, 255:red, 0; green, 0; blue, 0 }  ,fill opacity=1 ] (320.6,236.87) .. controls (320.6,236.6) and (320.38,236.37) .. (320.1,236.37) .. controls (319.82,236.37) and (319.6,236.6) .. (319.6,236.87) .. controls (319.6,237.15) and (319.82,237.37) .. (320.1,237.37) .. controls (320.38,237.37) and (320.6,237.15) .. (320.6,236.87) -- cycle ;
\draw  [fill={rgb, 255:red, 0; green, 0; blue, 0 }  ,fill opacity=1 ] (327.15,249.96) .. controls (327.15,249.69) and (326.92,249.46) .. (326.65,249.46) .. controls (326.37,249.46) and (326.15,249.69) .. (326.15,249.96) .. controls (326.15,250.24) and (326.37,250.46) .. (326.65,250.46) .. controls (326.92,250.46) and (327.15,250.24) .. (327.15,249.96) -- cycle ;
\draw  [fill={rgb, 255:red, 0; green, 0; blue, 0 }  ,fill opacity=1 ] (327.15,243.6) .. controls (327.15,243.32) and (326.92,243.1) .. (326.65,243.1) .. controls (326.37,243.1) and (326.15,243.32) .. (326.15,243.6) .. controls (326.15,243.88) and (326.37,244.1) .. (326.65,244.1) .. controls (326.92,244.1) and (327.15,243.88) .. (327.15,243.6) -- cycle ;
\draw  [fill={rgb, 255:red, 0; green, 0; blue, 0 }  ,fill opacity=1 ] (327.15,239.96) .. controls (327.15,239.69) and (326.92,239.46) .. (326.65,239.46) .. controls (326.37,239.46) and (326.15,239.69) .. (326.15,239.96) .. controls (326.15,240.24) and (326.37,240.46) .. (326.65,240.46) .. controls (326.92,240.46) and (327.15,240.24) .. (327.15,239.96) -- cycle ;
\draw  [fill={rgb, 255:red, 0; green, 0; blue, 0 }  ,fill opacity=1 ] (327.11,236.83) .. controls (327.11,236.56) and (326.88,236.33) .. (326.61,236.33) .. controls (326.33,236.33) and (326.11,236.56) .. (326.11,236.83) .. controls (326.11,237.11) and (326.33,237.33) .. (326.61,237.33) .. controls (326.88,237.33) and (327.11,237.11) .. (327.11,236.83) -- cycle ;
\draw  [fill={rgb, 255:red, 0; green, 0; blue, 0 }  ,fill opacity=1 ] (330.92,249.96) .. controls (330.92,249.69) and (330.7,249.46) .. (330.42,249.46) .. controls (330.15,249.46) and (329.92,249.69) .. (329.92,249.96) .. controls (329.92,250.24) and (330.15,250.46) .. (330.42,250.46) .. controls (330.7,250.46) and (330.92,250.24) .. (330.92,249.96) -- cycle ;
\draw  [fill={rgb, 255:red, 0; green, 0; blue, 0 }  ,fill opacity=1 ] (330.92,243.78) .. controls (330.92,243.51) and (330.7,243.28) .. (330.42,243.28) .. controls (330.15,243.28) and (329.92,243.51) .. (329.92,243.78) .. controls (329.92,244.06) and (330.15,244.28) .. (330.42,244.28) .. controls (330.7,244.28) and (330.92,244.06) .. (330.92,243.78) -- cycle ;
\draw  [fill={rgb, 255:red, 0; green, 0; blue, 0 }  ,fill opacity=1 ] (330.92,240.23) .. controls (330.92,239.95) and (330.7,239.73) .. (330.42,239.73) .. controls (330.15,239.73) and (329.92,239.95) .. (329.92,240.23) .. controls (329.92,240.5) and (330.15,240.73) .. (330.42,240.73) .. controls (330.7,240.73) and (330.92,240.5) .. (330.92,240.23) -- cycle ;
\draw  [fill={rgb, 255:red, 0; green, 0; blue, 0 }  ,fill opacity=1 ] (330.92,236.91) .. controls (330.92,236.64) and (330.7,236.41) .. (330.42,236.41) .. controls (330.15,236.41) and (329.92,236.64) .. (329.92,236.91) .. controls (329.92,237.19) and (330.15,237.41) .. (330.42,237.41) .. controls (330.7,237.41) and (330.92,237.19) .. (330.92,236.91) -- cycle ;
\draw  [fill={rgb, 255:red, 0; green, 0; blue, 0 }  ,fill opacity=1 ] (334.06,250.01) .. controls (334.06,249.73) and (333.83,249.51) .. (333.56,249.51) .. controls (333.28,249.51) and (333.06,249.73) .. (333.06,250.01) .. controls (333.06,250.28) and (333.28,250.51) .. (333.56,250.51) .. controls (333.83,250.51) and (334.06,250.28) .. (334.06,250.01) -- cycle ;
\draw  [fill={rgb, 255:red, 0; green, 0; blue, 0 }  ,fill opacity=1 ] (334.1,243.82) .. controls (334.1,243.55) and (333.87,243.32) .. (333.6,243.32) .. controls (333.32,243.32) and (333.1,243.55) .. (333.1,243.82) .. controls (333.1,244.1) and (333.32,244.32) .. (333.6,244.32) .. controls (333.87,244.32) and (334.1,244.1) .. (334.1,243.82) -- cycle ;
\draw  [fill={rgb, 255:red, 0; green, 0; blue, 0 }  ,fill opacity=1 ] (334.28,240.05) .. controls (334.28,239.77) and (334.05,239.55) .. (333.78,239.55) .. controls (333.5,239.55) and (333.28,239.77) .. (333.28,240.05) .. controls (333.28,240.32) and (333.5,240.55) .. (333.78,240.55) .. controls (334.05,240.55) and (334.28,240.32) .. (334.28,240.05) -- cycle ;
\draw  [fill={rgb, 255:red, 0; green, 0; blue, 0 }  ,fill opacity=1 ] (334.28,236.91) .. controls (334.28,236.64) and (334.05,236.41) .. (333.78,236.41) .. controls (333.5,236.41) and (333.28,236.64) .. (333.28,236.91) .. controls (333.28,237.19) and (333.5,237.41) .. (333.78,237.41) .. controls (334.05,237.41) and (334.28,237.19) .. (334.28,236.91) -- cycle ;
\draw  [fill={rgb, 255:red, 0; green, 0; blue, 0 }  ,fill opacity=1 ] (252.98,184.07) .. controls (253.17,183.87) and (253.16,183.56) .. (252.96,183.37) .. controls (252.76,183.18) and (252.45,183.18) .. (252.26,183.38) .. controls (252.06,183.58) and (252.07,183.9) .. (252.27,184.09) .. controls (252.47,184.28) and (252.79,184.27) .. (252.98,184.07) -- cycle ;
\draw  [fill={rgb, 255:red, 0; green, 0; blue, 0 }  ,fill opacity=1 ] (261.29,192.02) .. controls (261.48,191.82) and (261.48,191.5) .. (261.28,191.31) .. controls (261.08,191.12) and (260.76,191.13) .. (260.57,191.33) .. controls (260.38,191.53) and (260.39,191.84) .. (260.59,192.03) .. controls (260.78,192.22) and (261.1,192.22) .. (261.29,192.02) -- cycle ;
\draw  [fill={rgb, 255:red, 0; green, 0; blue, 0 }  ,fill opacity=1 ] (269.24,199.62) .. controls (269.44,199.42) and (269.43,199.1) .. (269.23,198.91) .. controls (269.03,198.72) and (268.71,198.73) .. (268.52,198.93) .. controls (268.33,199.13) and (268.34,199.44) .. (268.54,199.63) .. controls (268.74,199.82) and (269.05,199.82) .. (269.24,199.62) -- cycle ;
\draw  [color={rgb, 255:red, 208; green, 2; blue, 27 }  ,draw opacity=1 ] (362.83,122.63) -- (203.32,121.82) -- (203.9,7.35) ;

\draw (135.5,11) node [anchor=north west][inner sep=0.75pt]  [font=\footnotesize] [align=left] {$\displaystyle S_{0}$};
\draw (175.5,50.5) node [anchor=north west][inner sep=0.75pt]  [font=\footnotesize] [align=left] {$\displaystyle S_{1}$};
\draw (215.5,91) node [anchor=north west][inner sep=0.75pt]  [font=\footnotesize] [align=left] {$\displaystyle S_{2}$};
\draw (336.5,212) node [anchor=north west][inner sep=0.75pt]  [font=\footnotesize] [align=left] {$\displaystyle S_{pq-1}$};
\draw (371.5,44.9) node [anchor=north west][inner sep=0.75pt]    {$y$};
\draw (85.5,0.9) node [anchor=north west][inner sep=0.75pt]    {$z$};

\end{tikzpicture}

\end{center}

    \caption{The projection of {\rm Im}$(\psi)$ to the $y$-$z$ plane}
    \label{psi}
\end{figure}

For $0\leq r < pq$ let $m_r$ be the unique element of $M$ such that $m_r\equiv r\mod pq$ if there exists such an element, otherwise let $m_r:=r$. Define the function $\psi: \mathbb{N} \to \mathbb{R}^3$ as follows: let $n\in\N$, take the unique decomposition $n=n'+m_r$ such that $pq\mid n'$. For $n'<0$ let $\psi(n)=(n,0,0)$, for $n'=0$ let $\psi(m_r)=(m_r,1,1),$ otherwise let $\psi(n):= (n, (1- \frac{1}{g_1(n')+1})+r, (1- \frac{1}{g_2(n')+1})-r)$, where $g_1,g_2$ are the functions from the definition of $\phi$. Taking the projection of {\rm Im}$(\psi)$ to the $y$-$z$ plane, we can observe that the square $S_r=[r,r+1]\times [-r,-r+1]$ contains the image under $\psi$ of numbers that are congruent to $r$ modulo $pq$ and at least $m_r$, and the construction in $S_r$ is almost the same as in $S$, only it starts from $m_r$ instead of 0, see Figure \ref{psi}. Hence here we use that the elements of $M$ have different residues modulo $pq$.

Observe that every arithmetic progression with difference $d=p^iq^j$, $i, j>0$ and initial element $a_0=m_r+l\cdot p^iq^j$ (where $l\in\N$) is contained by $S_r$ for some $r$ on the $y$-$z$ plane. Similarly to the case $M=\{0\}$, the set $\{a_0 + k \cdot p^iq^j: k \in \mathbb{N},\,p^iq^j\mid (a_0-m_r)\}$ is exactly the inverse image under $\psi$ of the octant  $\{(x,y,z): x \geq a_0, y \geq 1-\frac{1}{i+1}+r, z \geq 1-\frac{1}{j+1}-r \}$, where $a_0\ge m_r$ and $m_r \equiv a_0 \mod pq$.

We have yet to consider the cases when the difference is $d\in\{1,p,q\}$. For $d=1,$ the image under $\psi$ of the arithmetic sequence $\{a_0+k: k\in\N\}$ is exactly $\mathrm{Im}(\psi)\cap \{(x,y,z): x\ge a_0, y\ge 0,z\ge -pq\}$. In case $d\in\{p,q\}$, every arithmetic progression $\{a_0+kd: k\in\N\}$ which has at least $\max(p,q)\cdot m_{\mathcal{T}_3}(k) $ elements  and an initial element $a_0=m_r+ld$ for some $l\in\N$
contains at least $m_{\mathcal{T}_3}(k)$ elements congruent to $m_r$ modulo $pq.$ These are elements of an arithmetic progression with difference $d=pq$ which we already considered in the previous case.

We prove $m_{\{p^iq^j:i,j \in \mathbb{N}\}, M}^{\infty}(k) = m_{\mathcal{T}_3}$ for the case $M=\{0\}$, as generalizing the proof to any $|M|=1$ is straightforward. We already showed $m_{\{p^iq^j:i,j \in \mathbb{N}\},\{0\}}^{\infty}(k) \le m_{\mathcal{T}_3}(k)$, to see the other direction, assign to every $T_A \in \mathcal{T}_3$ a hypergraph $A \in \mathcal{A}_{D,\{0\}}^{\infty}$, $D = \{p^iq^j:i,j \in \mathbb{N}\}$ by a function $\gamma: V(T_A) \to \mathbb{N}$ such that for every hyperedge (octant) $E\subseteq V(T_A)$ the vertex set (set of some natural numbers) $\gamma[E]$ is a hyperedge (arithmetic progression with proper difference) in $A$. We can assume that the vertices of $T_A$ have distinct $x$-, $y$- and also $z$-coordinates, since perturbing every point a little to obtain different coordinates results in a more extensive set of hyperedges. Order the elements of $V(T_A)$ in ascending order by their $x$-coordinate and denote by $x(P)$ the place of $P$ in the ordering for every $P \in V(T_A)$. Define $y(P)$ and $z(P)$ similarly. Let $\gamma(P) = p^{y(P)} \cdot q^{z(P)} \cdot k$ with some $k$ such that $\gamma(P)$ be the $x(P)^{th}$ largest value in {\rm Im}($\gamma$). Notice that $|V(T_A)|<\infty$. Now every vertex set in $T_A$ which is captured by an octant corresponds to a subset of an arithmetic progression with difference $p^iq^j$ and $a_0=0$.
\end{proof}

\begin{proof}[Proof of Theorem~\ref{vegtelen_sorozat}. (3)]
We use the fact that $m_{\mathcal{T}_4}(k) = \infty$ \cite[Theorem 12]{hextant}. Again, we assign to every $T_A \in \mathcal{T}_4$ a hypergraph $A \in \mathcal{A}_{D,\{0\}}^{\infty}$, $D = \{p^iq^j:i,j \in \mathbb{N}\}$ by a function $\eta: V(T_A) \to \mathbb{N}$ such that for every hyperedge (hextant) $E\subseteq T_A$ the vertex set (set of some natural numbers) $\eta[E]$ is a hyperedge (arithmetic progression with proper difference) in $A$. It follows that if $m := m_{\left\{ p_1^{j_1}p_2^{j_2}p_3^{j_3}:j_1,j_2,j_3 \in \mathbb{N}\right\},\{0\}}^{\infty}(k)$ would be finite then we could polichromaticly $k$-color the hyperedges of an arbitrary $(T_A)_{\geq m} \in \mathcal{T}_4$. 

Now we execute the very same method as in the end of the proof of Theorem 6.2. We can assume again that the vertices of $T_A$ have distinct $x$-, $y$-, $z$- and also $w$-coordinates. Order the elements of $V(T_A)$ in ascending order by their $x$-coordinate and denote by $x(P)$ the place of $P$ in the ordering for every $P \in V(T_A)$. Define $y(P)$, $z(P)$ and $w(P)$ similarly. Let $\eta(P) = p_1^{y(P)} \cdot p_2^{z(P)} \cdot p_3^{w(P)} \cdot k$ with some $k$ such that $\eta(P)$ be the $x(P)^{th}$ largest value in {\rm Im}($\eta$). Now every vertex set in $T_A$ captured by a hextant corresponds to a subset of an arithmetic progression with difference $p^iq^j$ and $a_0=0$. 
\end{proof}

\begin{proof} [Proof of Theorem~\ref{veges_sorozat}. (1)]
Firstly, we show that $m_{\{t^i:i \in \mathbb{N}\}, \{0\}}(k) \leq m_{\mathcal{B}}(k)$. To prove this, we construct a $\phi:\mathbb{N}\to \mathbb{R}^2$ injection such that for every fixed $i, j_0$ and $j_{\text{max}}$ the set $\phi[\{j \cdot t^i: j \in \mathbb{N}^+, j_0 \leq j \leq j_{\text{max}}\}]$ can be defined by a bottomless rectangle. Denote the arithmetic sequence $j_0 \cdot t^i, \ldots, j_{\text{max}} \cdot t^i$ by $a_0, \ldots, a_{\text{max}}$. If $n = j \cdot t^i$ where $t$ is not a divisor of $j$, let $t(n) = i$. Define $\phi(n)= (1- \frac{1}{n}, \frac{1}{t(n)})$ and $\phi(0)=(0,0)$. Notice that the vertical strip $\{(x,y): 1-\frac{1}{a} \leq x \leq 1-\frac{1}{b}\}$ for some $a \leq b \in \mathbb{N}$ contains the images of numbers between $a$ and $b$, and the half-plane $\{(x,y): y \leq \frac{1}{i}\}$ for some $i \in \mathbb{N}$ contains the images of numbers which are divisible by $t^i$. Therefore, the set $\phi[\{a_0, \ldots, a_{\text{max}}\}]$ is the intersection of {\rm Im}$(\phi)$ and the bottomless rectangle $\{(x,y): 1-\frac {1}{a_0} \leq x \leq 1- \frac {1}{a_{\text{max}}}, \text{ } y \leq \frac{1}{i}\}$ if $j_0>0$, and $\{(x,y): 0 \leq x \leq 1- \frac {1}{a_{\text{max}}}, \text{ } y \leq \frac{1}{i}\}$ if $j_0=a_0=0$.

Notice that in this construction {\rm Im}$(\phi)$ was contained in the bottomless rectangle $\{(x,y): 0 \leq x \leq 1, \text{ } y \leq 1\}$. In the general case, we place $t$ bottomless rectangles such that the bottomless rectangle $B_r$ ($0 \leq r < t$) will contain numbers congruent to $r$ modulo $t$. 

Define the function $\psi: \mathbb{N} \to \mathbb{R}^2$ such that for $n=m\cdot t+r$, $0 \leq r<t$, let $\psi(n):= (2r+1-\frac{1}{n}, \frac{1}{t(n)})$ and $\psi(0)=(0,0)$. Observe that every arithmetic progression with difference $d=t^i$, $i>0$ is contained by $B_r$ for some $r$, where $B_r = \{(x,y): 2r \leq x \leq 2r+1, \text{ } y \leq 1\}$, since the $t-$residue of such a sequence is constant. Similarly to the case $M=\{0\}$, the set $\{m_r + j \cdot t^i: j \in \mathbb{N}, j \leq j_{\text{max}}\}$ is the intersection of {\rm Im}$(\psi)$ and the bottomless rectangle $\{(x,y): 2r+1- \frac {1}{a_0} \leq x \leq 2r+1- \frac {1}{a_{\text{max}}}, \text{ } y \leq \frac{1}{i}\}$ where $r \equiv m_r \mod t$, $a_0 = m_r + 0 \cdot t^i -r, a_1 = m_r + 1 \cdot t^i- r, \ldots, a_{\text{max}} = m_r + j_{\text{max}} \cdot t^i -r$, if $m_r \neq r$ (thus $a_0 \neq 0$). If $m_r=r$, then the corresponding bottomless rectangle is $\{(x,y): 2r \leq x \leq 2r+1- \frac {1}{a_{\text{max}}}, \text{ } y \leq \frac{1}{i}\}$. 

In the case $i=0$ (and consequently $d=1$), every hyperedge with at least $|M|\cdot(m_{\mathcal{B}}(k) -1)+1$ vertices  
contain at least $m_{\mathcal{B}}(k)$ vertices in $B_r$ for some $r\in M$, according to the pigeonhole principle. The corresponding numbers of these vertices form an arithmetic progression with difference $d=t$. We have already proved that this implies containment in a bottomless rectangle.
\end{proof}

\begin{proof}[Proof of Theorem~\ref{veges_sorozat}. (2)]
Firstly, we define the hypergraph class $\mathcal{T}_{fin}$. Let $A \in \mathcal{T}_{fin}$ if $A$ is a hypergraph such that $V(A) \subset \mathbb{R}^3$, $|V(A)|< \infty$, and $$E(A) \subseteq \{V(A) \cap \{(x,y,z): x_1 \leq x \leq x_2, \text{ } y \leq y_0, \text { } z \leq z_0\}: \text{ } x_1,x_2,y_0,z_0 \in \mathbb{R}\}.$$
    \begin{lemma}
    $m_{\mathcal{T}_{fin}}(k) = \infty$.
    \end{lemma}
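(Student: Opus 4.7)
The plan is to deduce this from the already-cited fact that the hypergraph family $\mathcal{R}$ of axis-parallel rectangles satisfies $m_{\mathcal{R}}(k) = \infty$. The idea is a simple lifting from $\mathbb{R}^2$ to $\mathbb{R}^3$ that turns every axis-parallel rectangle into a truncated octant of the shape allowed in $\mathcal{T}_{fin}$.

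Concretely, I would define $\iota: \mathbb{R}^2 \to \mathbb{R}^3$ by $\iota(x,y) := (x, y, -y)$, which is clearly injective. For a rectangle $R = [x_1, x_2] \times [y_1, y_2]$, set $T_R := \{(x,y,z): x_1 \leq x \leq x_2,\ y \leq y_2,\ z \leq -y_1\}$. Then $(x,y) \in R$ if and only if $\iota(x,y) \in T_R$: the constraint $y \leq y_2$ matches the upper bound on $y$, while $z \leq -y_1$ becomes $-y \leq -y_1$, which is equivalent to $y \geq y_1$. Consequently, $\iota$ restricts to a bijection $R \cap V \to T_R \cap \iota(V)$ for every finite $V \subset \mathbb{R}^2$.

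Given any $H = (V, E) \in \mathcal{R}$ with each edge $e \in E$ realized as $R_e \cap V$ for some rectangle $R_e$, define $H' \in \mathcal{T}_{fin}$ on vertex set $\iota(V)$ whose edges are exactly the sets $T_{R_e} \cap \iota(V)$. Then $H'$ is isomorphic to $H$ as a hypergraph via $\iota$, and in particular edge sizes are preserved, so $H'_{\geq m}$ corresponds to $H_{\geq m}$ for every $m$. Any polychromatic $k$-coloring of $H'_{\geq m}$ pulls back along $\iota$ to a polychromatic $k$-coloring of $H_{\geq m}$. Hence finiteness of $m_{\mathcal{T}_{fin}}(k)$ would force finiteness of $m_{\mathcal{R}}(k)$, contradicting the known equality $m_{\mathcal{R}}(k) = \infty$. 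The only moment of thought is in choosing the lifting $\iota$; beyond that the verification is immediate from the definitions, so I do not expect a real obstacle here, since the $\subseteq$ in the definition of $E(A)$ for members of $\mathcal{T}_{fin}$ freely permits us to select precisely the desired truncated-octant intersections as edges of $H'$.
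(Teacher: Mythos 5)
Your proof is correct and is essentially the paper's own argument: the paper embeds $\mathcal{R}$ into a plane with normal vector $(0,1,1)$, i.e.\ the plane $y+z=\mathrm{const}$, which is exactly your lifting $(x,y)\mapsto(x,y,-y)$ made geometric. The explicit verification you give of the edge correspondence matches what the paper leaves to the figure.
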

    \begin{proof}
        As stated previously, $m_{\mathcal{R}}(k) = \infty$ \cite[Theorem 3]{teglalap}.
        Take a plane $H_0$ in $\mathbb{R}^3$ whose normal vector is $(0,1,1)$, with coordinate axes given by $(1,0,0)$ and $(0,1,-1)$. Then for every axis-parallel rectangle $r$ of $H_0$ there exists a set of form $a=\{(x,y,z): x_1 \leq x \leq x_2, \text{ } y \leq y_0, \text { } z \leq z_0\}$ such that $a\cap H_0$ is exactly $r$, we refer to Figure \ref{ferde} for an illustration. Thus, for every $R \in \mathcal{R}$ on $H_0$, there exists a hypergraph $A \in \mathcal{T}_{fin}$, such that $V(R) = V(A)$, and every hyperedge in $R$ is also a hyperedge in $A$. Therefore, $m_{\mathcal{T}_{fin}}(k) \geq m_{\mathcal{R}}(k) = \infty$.
\end{proof}

\begin{figure} [h!]
    \centering
    \includegraphics[width=0.7\textwidth]{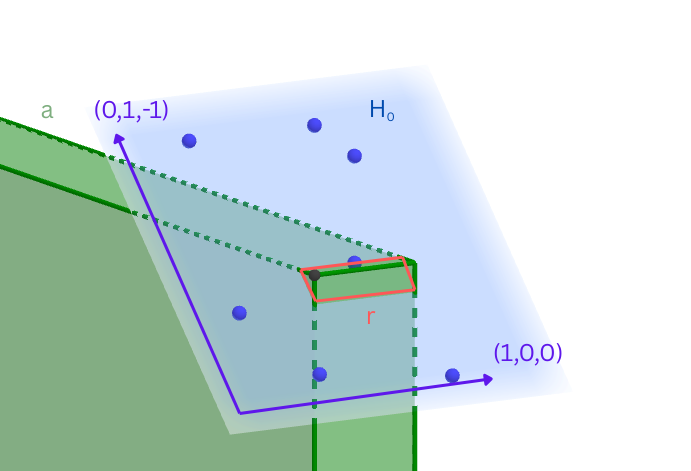}
    \caption{Cutting out an axis-parallel rectangle of $H_0$}
    \label{ferde}
\end{figure}

Our goal is to show that $m_{\{p^iq^j:i,j \in \mathbb{N}\},\{0\}}(k) \geq m_{\mathcal{T}_{fin}}(k)$. 

We have already shown in the proof of Theorem \ref{vegtelen_sorozat}. (2) that 
$m_{\{p^iq^j:i,j \in \mathbb{N}\},\{0\}}^{\infty}(k) \geq m_{\mathcal{T}_3}(k)$. To every $T_A \in \mathcal{T}_3$, we have assigned a hypergraph $A \in \mathcal{A}_{D,\{0\}}^{\infty}$, $D = \{p^iq^j:i,j \in \mathbb{N}\}$ by an injection $\gamma_A: V(T_A) \to \mathbb{N}$ such that for every hyperedge (octant) $E\subseteq V(T_A)$ the vertex set (set of some natural numbers) $\gamma_A[E]$ was a hyperedge (arithmetic progression with proper difference $d_E$) in $A$. Observe that every set $E_{x_1,x_2,y_0,z_0} = \{(x,y,z): x_1 \leq x \leq x_2, \text{ } y \leq y_0, \text { } z \leq z_0\}$ is the subset of an octant $O_{x_1,y_0,z_0} = \{(x,y,z): x_1 \leq x, \text{ } y \leq y_0, \text { } z \leq z_0\}$. 

Therefore, for every $T_{A'} \in \mathcal{T}_{fin}$ we have a $T_A \in \mathcal{T}_3$ such that $V(T_A) = V(T_{A'})$, and every hyperedge in $T_{A'}$ is the subset of a hyperedge of $T_A$. Use the same $\gamma_A$ injection to obtain a hypergraph $A' \in \mathcal{A}_{D,\{0\}}$. The image of every octant $O_{x_1,y_0,z_0} \cap V(T_A)$ is (the subset of) an arithmetic progresson with some $a_0$ and difference $d=p^iq^j$. Notice that $a_0$ is divisible by $d$. Take a hyperedge $E = E_{x_1,x_2,y_0,z_0} \cap V(T_{A'})$ of $T_{A'}$ with an arbitrary $x_2 \in \mathbb{R}$. The image $\gamma_A[E]$ is the prefix of the arithmetic progression above, with last element $a_n$. If $a_n = K \cdot d$ then $\gamma_A[E] = \{k \cdot d: k \in \mathbb{N}^+, k \leq K\} \cap V(A')$, which can be a hyperedge in $A'$.
Therefore, $m_{\{p^iq^j:i,j \in \mathbb{N}\},\{0\}}(k) \geq m_{\mathcal{T}_{fin}}(k) = \infty$.
\end{proof}

\section*{Acknowledgements}

The research was supported by the Lend\"ulet program of the Hungarian Academy of Sciences (MTA). BB was also supported by Lend\"ulet Grant no. 2022-58. of the Hungarian Academy of Sciences (MTA).

We are extremely grateful to Balázs Keszegh and Dömötör Pálvölgyi for their valuable remarks. We are also thankful to Sára Tóth for her collaboration throughout the research.

\bibliographystyle{plain}
\bibliography{ref}

\medskip

\end{document}